\newcommand{\opt}{\mathrm{OPT}}
\newcommand{\mb}[1]{\ensuremath{\boldsymbol{#1}}}
\begin{document}
\title{On the Power of Static Assignment Policies for Robust Facility Location Problems}
\titlerunning{Robust Facility Location Problems}
%
\author{Omar El Housni\inst{1} \and
Vineet Goyal\inst{2} \and
David Shmoys\inst{3} }
\authorrunning{El Housni et al.}
%
\institute{ORIE, Cornell Tech, New York, USA\\  
\email{oe46@cornell.edu}  \and
IEOR, Columbia University , New York, USA \\
\email{vg2277@columbia.edu} \and
ORIE, Cornell University, Ithaca, USA\\
\email{david.shmoys@cornell.edu}}
\maketitle              
\begin{abstract}
We consider a two-stage robust facility location problem on a metric under an uncertain demand. The decision-maker needs to decide on the (integral) units of supply for each facility in the first stage to satisfy an uncertain second-stage demand, such that the sum of first stage supply cost and the worst-case cost of satisfying the second-stage demand over all scenarios is minimized. The second-stage decisions are only assignment decisions without the possibility of adding  recourse supply capacity. This makes our model different from existing work on two-stage robust facility location and set covering problems. We consider an implicit model of uncertainty with an exponential number of demand scenarios specified by an upper bound $k$ on the number of second-stage clients. In an optimal solution, the second-stage assignment decisions depend on the scenario; surprisingly,  we show that  restricting to a fixed (static) fractional assignment for each potential  client irrespective of the scenario  gives us an $O(\log k/\log \log k)$-approximation for the problem. Moreover, the best such static assignment can be computed efficiently giving us the desired guarantee.


\end{abstract}
\section{Introduction}

We consider two-stage robust facility location problems under demand uncertainty where we are given a set of clients and a set of facilities in a common metric space. In the first stage, the decision-maker needs to select the (integral) units supply at each facility. The uncertain demand is then selected adversarially and needs to be satisfied by the existing supply with the minimum assignment cost in the second stage.  The goal is to determine the first-stage supply such that the sum of first-stage supply cost and the worst-case  assignment cost over all demand scenarios is minimized. Our problem is motivated by settings where the lead time to procure supply is large and obtaining additional units of supply in the second stage is not feasible. The uncertain second-stage demand must then be  satisfied by supply units from the first stage, a common constraint in many applications. This is a departure from existing work on two-stage robust facility location, network design and, more generally, robust covering problems that have been studied extensively in the literature~\cite{dhamdhere2005pay,FJMM07,anthony2010plant,gupta2014thresholded} where the second-stage decisions allow for ``adding more supply'' (more specifically adding more sets/facilities to satisfy the requirement). 



In this paper, we consider  an  implicit  model  of  uncertainty  with exponentially-many demand scenarios specified by an upper bound $k$ on the number of second-stage demand clients.  Since the number of scenarios is exponentially-many (specified compactly), we can not efficiently solve even the LP relaxation for the problem. In contrast, if the set of second-stage scenarios are explicitly specified (for the explicit scenario model, see for instance \cite{dhamdhere2005pay,anthony2010plant}), we can write a polynomially-sized LP relaxation with assignment decisions for each scenario. The main challenge then is related to obtaining an integral solution, which for the case of set covering and several network design problems can be reduced to deterministic versions (see, for instance, Dhamdhere et al. \cite{dhamdhere2005pay}). 
 
In contrast, in an implicit model of uncertainty (with possibly exponentially-many scenarios), one of the fundamental challenges is to even approximately solve the linear relaxation of the problem efficiently. The implicit model of uncertainty with an upper bound on number of uncertain second-stage clients or elements has been studied extensively in the literature. 
 Feige et al. \cite{FJMM07} show under a reasonable complexity assumption that it is hard to  solve the LP relaxation of a two-stage set covering problem within a factor better than $\Omega( \log n / \log \log n)$ under this implicit model of uncertainty. They also give an $O(\log^2n)$-approximation for the $0-1$ two-stage robust set-covering problem. Gupta et al.~\cite{gupta2014thresholded} give an improved $O(\log n)$-approximation for the set-covering problem, thereby matching the deterministic approximation guarantee. El Housni and Goyal~\cite{housni2018optimality} show that a static policy (that is, linear in the set of second-stage elements, also referred to as an affine policy) gives an $O(\log n/\log\log n)$-approximation for the two-stage LP, thereby matching the hardness lower bound for the fractional problem. Gupta et al. \cite{gupta2016robust} and Khandekar et al. \cite{khandekar2008two} present approximations for several network design problems under the implicit model of uncertainty. Although there is a large body of work in this direction, as we mentioned earlier, our problem is different from set covering, since there is no possibility of adding recourse capacity;  prior results do not  imply an approximation for our model.


\noindent
{\bf Other related work.}
Many variants of  facility location problems have been studied extensively in the literature, including both deterministic versions as well as variants that address demand uncertainty. We refer the reader to \cite{shmoys2000approximation} for a  review of the many variants of deterministic facility location problems. Among the models that address demand uncertainty in the facility location problems, in addition to robust  \cite{atamturk2007,charikar2001algorithms}, there are also stochastic \cite{gupta2004boosted,immorlica2004costs,swamy2006approximation,ravi2004hedging} and distributionally robust \cite{linhares2019approximation,basciftci2019distributionally,delage2010distributionally} models that have been studied extensively in the literature. In a stochastic model, there is a distribution on the second-stage demand scenarios and the goal is to minimize the total expected cost.
A distributionally robust model can be thought of as a hybrid between stochastic and robust where the second-stage distribution is selected adversarially from a pre-specified set and the goal is to minimize the worst-case expected cost. We refer the reader to the survey \cite{snyder2006facility} for an extensive review of facility location problems under uncertainty.

\subsection{Our Contributions} 

Let us begin with a formal problem definition. We are given a set of $n$ facilities $\cal F$ and $m$ clients $\cal C$ in a common metric $d$ where $d_{ij}$ denotes the distance between $i$ and $j$. 
For each facility $i \in {\cal F}$, there is a cost $c_i$ per unit of supply at $i$. 
The demand uncertainty is modeled by an implicit set of scenarios ${\cal C}^k$ that includes all subsets of clients $\cal C$ of size at most $k$.
The decision-maker needs to select an (integral) number of units of supply $x_i$ for each facility $i\in \cal F$ in the first-stage. An adversary observes the first stage decisions and selects a worst-case demand scenario $S \in {\cal C}^k$  that must be satisfied with the first-stage supply, where each client in the realized scenario needs one unit of supply. The goal is to minimize the sum of the first-stage supply cost and the worst-case  assignment cost over all second-stage demand scenarios. We refer to this problem as {\textit {soft-capacitated robust facility location}} \eqref{scrfl}. Typically in the literature, {\em soft-capacitated} refer to settings where violations of capacity upper bounds are allowed. The analogue here is that we can add any amount of supply in a facility without  upper bounds ($x_i \in \mathbb{Z_+})$ but we pay a per unit cost of supply. %

We consider a class of static assignment policies, where each of the $m$ clients has a static fractional assignment to facilities that is independent of the scenario, leading to a feasible second-stage solution for each demand scenario, while respecting supply capacities. Note this is a restriction, since the optimal second-stage assignment decisions are scenario-dependent in general. As a warm-up, we show that static assignment policies are optimal for the  uncapacitated case with unlimited supply at each open facility (i.e., there is a cost $c_i$ to open facility $i$ with unlimited supply). We refer to this problem as {\textit {uncapacitated robust facility location}} \eqref{urfl}. This is based on the intuition that each client can be assigned to the closest open facilities in an optimal solution in any scenario; this leads to  optimality of a static assignment policy for the LP relaxation (Theorem \ref{thm:static-opt}).

\begin{theorem} \label{thm:static-opt}
A static assignment policy is optimal for the linear relaxation of \eqref{urfl}. 
\end{theorem}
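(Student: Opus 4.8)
The plan is to fix the first-stage vector $x$ and show that, for this fixed $x$, the optimal fully scenario-dependent (adaptive) second-stage cost equals the optimal \emph{static} second-stage cost; the theorem then follows by taking the minimum over all feasible $x$. In the LP relaxation of \eqref{urfl} the first stage chooses $x_i \in [0,1]$ and pays $\sum_i c_i x_i$; in the adaptive model each scenario $S \in \mathcal{C}^k$ gets its own assignment $y^S_{ij} \ge 0$ with $\sum_i y^S_{ij} = 1$ for $j \in S$ and $y^S_{ij} \le x_i$, whereas the static model must commit to a single $y_{ij}$ (with $\sum_i y_{ij} = 1$ and $y_{ij} \le x_i$) reused in every scenario.

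The key structural observation is that, because the facilities are uncapacitated, the second-stage assignment problem \textbf{decouples across clients}: no constraint links the assignment of one client to that of another. Hence, for fixed $x$, the cheapest way to serve any client $j$ is to solve the single-client fractional problem
$$f_j(x) = \min \Big\{ \sum_i d_{ij}\, y_{ij} \ : \ \sum_i y_{ij} = 1, \ 0 \le y_{ij} \le x_i \Big\},$$
which simply fills the nearest fractionally-open facilities up to one unit of demand. Crucially, a minimizer $y^\star_{ij}(x)$ of $f_j$ does not depend on which other clients appear in the scenario. Therefore the optimal adaptive cost for any scenario $S$ is exactly $\sum_{j \in S} f_j(x)$, and the worst-case adaptive second-stage cost is $\max_{S \in \mathcal{C}^k} \sum_{j \in S} f_j(x)$, i.e. the sum of the $k$ largest values among $\{f_j(x)\}_{j \in \mathcal{C}}$.

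Next I would combine this with an elementary two-sided comparison. Committing to one assignment across all scenarios is a restriction, so the static optimum is at least the adaptive optimum. For the reverse inequality, note that the scenario-independent assignment $y^\star(x)$ is itself a feasible static policy, and under it the worst-case cost over $S \in \mathcal{C}^k$ is again $\max_S \sum_{j \in S} f_j(x)$, matching the adaptive value exactly. Hence the static and adaptive second-stage costs coincide for every fixed $x$; adding the common first-stage term $\sum_i c_i x_i$ and minimizing over feasible $x$ shows that the best static assignment policy attains the LP optimum of \eqref{urfl}.

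The only substantive point is the decoupling claim, so I would state explicitly where uncapacitatedness is used: it is precisely the absence of shared supply constraints that lets a single scenario-independent assignment be simultaneously optimal for every scenario. I would flag the contrast with the capacitated setting, where clients compete for limited supply and the optimal assignment genuinely varies with the scenario, which is exactly why the analogous statement fails there. The remaining steps are routine bookkeeping: checking feasibility of $y^\star(x)$ as a static policy and identifying the worst-case scenario with the $k$ costliest clients.
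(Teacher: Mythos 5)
Your proof is correct and follows essentially the same route as the paper: both exploit the fact that in the uncapacitated LP the constraint $y^S_{ij} \le x_i$ decouples across clients, so each client's optimal assignment greedily fills its nearest fractionally-open facilities independently of the scenario, and this scenario-independent assignment is therefore a feasible static policy attaining the adaptive optimum. Your write-up merely makes explicit the two-sided comparison (static $\ge$ adaptive as a restriction, static $\le$ adaptive via the per-client minimizer) that the paper leaves implicit when it rewrites the optimal solution in static form.
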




The optimality of static assignment is not true in general when the supply at facilities is constrained (or equivalently, there is a cost per unit of supply). The main contribution in this paper is to show that static assignment policies give an $O(\log k/\log \log k)$-approximation for the LP relaxation of \eqref{scrfl} (Theorem~\ref{thm:static:log}). We show this by constructing such a solution, starting from an optimal first-stage supply. The optimal static assignment policies can be computed efficiently by solving a compact LP.

\begin{theorem}\label{thm:static:log}
A static assignment policy  gives $O(\log k / \log \log k)$-approximation for the linear relaxation of \eqref{scrfl}.
\end{theorem}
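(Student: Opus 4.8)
The plan is to exploit the fact that the static assignment LP is itself a \emph{compact} linear program: the worst case over $\mathcal{C}^k$ of both the assignment cost and of each facility's load is a sum of the $k$ largest coordinates, which is linearizable, so it suffices to \emph{exhibit} one static solution whose cost is $O(\log k/\log\log k)$ times the optimum of the adaptive LP relaxation. First I would fix an optimal adaptive solution, consisting of a first-stage supply $x^\star$ and, for each scenario $S\in\mathcal{C}^k$, an optimal transportation $y^S$ serving $S$ within the capacities $x^\star$, and write $\opt=C^\star+R$ with $C^\star=\sum_i c_i x^\star_i$ and $R=\max_S\sum_{j\in S}\sum_i d_{ij}y^S_{ij}$ the worst-case second-stage cost. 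I then construct a static assignment $\bar y$ (with $\sum_i\bar y_{ij}=1$ for every client $j$) together with a scaled supply $\hat x=\Gamma x^\star$, $\Gamma=O(\log k/\log\log k)$, so that (a) for every scenario $S$ the static assignment cost $\sum_{j\in S}\sum_i d_{ij}\bar y_{ij}$ is $O(\Gamma)R$, and (b) for every facility $i$ the worst-case load $\max_S\sum_{j\in S}\bar y_{ij}$ (the sum of the $k$ largest $\bar y_{ij}$) is at most $\hat x_i$. Such a pair $(\bar y,\hat x)$ is static-feasible with cost $O(\Gamma)\,\opt$, proving the theorem.

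The single structural property of $x^\star$ I would use is that \emph{every} set of at most $k$ clients is cheaply servable: for any $T$ with $|T|\le k$, the transportation cost of $T$ against $x^\star$ is at most $R$. A Markov-type bound on the optimal flow then shows that at most $R/r$ units of $T$'s demand travel beyond distance $r$, so the supply within distance $r$ of $T$ satisfies $N(T,r):=\sum_{i:\,d(i,T)\le r}x^\star_i\ge |T|-R/r$, where $d(i,T)$ is the distance from $i$ to the nearest client of $T$. Applied to nested balls around a single client (and around carefully chosen $k$-subsets) at a geometric sequence of radii $r_t=\rho\,b^{\,t}$, this yields a guaranteed amount of reachable supply in each shell against which the assignment can be charged.

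I would build $\bar y$ by a level-by-level water-filling: each client $j$ sends its unit of demand outward through the shells $r_0<r_1<\dots$, spreading it within a shell across the reachable facilities proportionally to $x^\star$, and capping the amount any single facility receives from $j$ so that no facility is overloaded. Property (a) then telescopes, since the richness bound forces the demand still unserved after level $t$ to decay geometrically, making the total assignment cost a geometric sum dominated by $O(\text{number of levels})\cdot R$. Property (b) is where the capping and the choice of base $b$ enter: spreading proportionally to $x^\star$ keeps the \emph{average} per-scenario load on each facility at $O(x^\star_i)$, and the overload from taking the $k$ largest contributions rather than the average is governed by the combinatorial fact that, for nonnegative numbers of bounded total, the sum of the $k$ largest exceeds their fair share by at most a factor $\Theta(\log k/\log\log k)$. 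Setting the base and the number of levels each to $\Theta(\log k/\log\log k)$ and balancing (a) against (b) produces $\Gamma=O(\log k/\log\log k)$.

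The hard part will be property (b): a single static fractional assignment must simultaneously avoid overloading \emph{every} facility against \emph{all} of the exponentially many scenarios in $\mathcal{C}^k$, whereas a scenario-dependent flow can reroute around congestion for free. Controlling the worst-case top-$k$ load of the merged assignment is precisely the step that costs the $\log k/\log\log k$ factor, and I expect it to be essentially unimprovable: mirroring the $\Omega(\log n/\log\log n)$ hardness of Feige et al.~\cite{FJMM07} for two-stage robust covering, a matching lower bound for static policies should hold here. This congestion phenomenon is exactly what is absent when capacities are unbounded, where a static policy is optimal (Theorem~\ref{thm:static-opt}).
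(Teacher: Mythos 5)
Your reduction to a compact LP and your overall target (scale the supply by $\Gamma=O(\log k/\log\log k)$, exhibit one static $\bar y$ with per-scenario cost $O(\Gamma)\cdot\opt_2$ and worst-case top-$k$ load at most the scaled supply) are the right framing, and your richness bound $N(T,r)\ge |T|-R/r$ is a genuine ingredient (it is essentially what drives the paper's Lemma \ref{lem:C2}). But the step on which your whole capacity argument (b) rests is false. There is no combinatorial fact that, for nonnegative numbers of bounded total, the sum of the $k$ largest exceeds their fair share by $O(\log k/\log\log k)$: if $m$ clients each place weight $a$ on facility $i$, the average per-scenario load is $(k/m)\cdot ma = ka\cdot(k/m)\cdot(m/k)$, i.e., the top-$k$ load $ka$ exceeds the fair share $(k/m)\,ma$ by a factor $m/k$, which is unbounded. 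So proportional spreading plus averaging cannot control $\max_S \sum_{j\in S}\bar y_{ij}$. The obvious fix, a uniform per-client cap $\bar y_{ij}\le x^*_i/k$, makes every facility trivially feasible against any $k$-scenario, but it destroys the cost bound for exactly the clients your shells are supposed to rescue: an isolated client whose nearby supply is one unit at a single facility may place only $1/k$ there and is forced to reach distances far beyond $O(\log k/\log\log k)\cdot \opt_2/k$. The cap and the averaging are in direct tension, and the $\log k/\log\log k$ factor does not come from a load-concentration inequality at all.

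What is missing is a mechanism deciding \emph{which} clients may share capped $x^*/k$ fractions and which need dedicated supply; this classification is the heart of the paper's proof. Procedure \ref{alg-soft} grows balls with geometrically increasing client counts (base $\alpha$, hence depth at most $\log_\alpha k$) and splits $\cal C$ into three sets: $C_1$, clients in balls already containing $\ge k$ clients --- only these use the cap $x^*_i/k$, which is both feasible (any scenario has $\le k$ clients) and cheap (a scenario of $k$ co-located clients forces $\approx k$ units of optimal supply within distance $O(\log_\alpha k)\cdot\opt_2/k$, Lemma \ref{lem:C1}); $C_2$, clients whose nearby supply is less than half their number --- Lemma \ref{lem:C2} shows $|C_2|\le k$, since otherwise packing them into a single scenario would force second-stage cost exceeding $\opt_2$, so $C_2$ \emph{is itself one scenario} and receives a dedicated copy of $x^*$ together with its scenario-optimal assignment as the static assignment; and $C_3$, regions whose nearby supply is a $1/(2\alpha)$ fraction of their clients --- that supply is multiplied by $2\alpha$, relocated to the cheapest facility, and dedicated (Lemma \ref{lem:C3}). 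The first stage pays $O(\alpha)\cdot\opt_1$, the second stage $O(\log_\alpha k)\cdot\opt_2$, and balancing $\alpha=\log k/\log\log k$ gives the theorem: the exponent tradeoff is supply inflation versus ball-growing depth, not top-$k$ versus average load. Note also that your single-client Markov bound is vacuous at the relevant scale (it bites only for $r>R$, while the budget per client is $O(R/k)$ up to the approximation factor); it is only through the $\ge k$-client density of $C_1$ balls, the $|C_2|\le k$ dedication, and the $2\alpha$-scaled dedicated supply for $C_3$ that every client gets served both feasibly and cheaply. Without an analogue of these three cases, your water-filling either violates capacity or pays unbounded distance in supply-sparse regions.
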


Furthermore, the fractional supply in the first stage can be rounded to an integral supply using ideas similar to rounding algorithms for the deterministic facility location \cite{shmoys1997approximation}. In particular, the static assignment solution for the uncapacitated case can be rounded to give a $4$-approximation algorithm for \eqref{urfl}.  The static assignment solution for the soft-capacitated  case can be rounded within a constant factor, which results in  an $O(\log k / \log \log k)$-approximation algorithm for \eqref{scrfl}. We would like to note that while the fractional assignment is static in our approximate LP solution, our integral assignment for any client in the second-stage  depends on the other demand clients in the scenario; thereby, making our static assignment policy adaptive in implementation.

\section{Warm-up: uncapacitated robust facility location  }

\subsection{Problem formulation}

In this section, we consider the {\textit{uncapacitated robust facility location problem}} ({\sf URFL}) where for each $i \in {\cal F}$, there is a cost $c_i$ to open facility $i$ with unlimited supply. The problem can be stated as the following integer program, where each binary variable $x_i, i \in {\cal F}$ indicates if  facility $i$ is opened and each $y_{ij}^S, i \in {\cal F}, j \in S, S \in {\cal C}^k$ indicates the assignment of client $j$  to facility $i$ in scenario $S$.

\begin{equation}\label{urfl} \tag{\sf URFL}
\begin{aligned}
\min \; &  \sum_{i \in {\cal F}} c_i x_i +\max_{ S \in {\cal C}^k} \sum_{i \in {\cal F}} \sum_{ j \in {S}}  d_{ij} y_{ij}^S &\\
\text{s.t.    } & \sum_{i \in {\cal F}} y_{ij}^S \geq 1, \qquad &   \forall S \in {\cal C}^k,\forall j \in {S},     \\
& x_i \geq y_{ij}^S, \qquad  & \forall i  \in {\cal F},   \forall S \in {\cal C}^k,\forall j \in {S}, \\
& x_i \in \{0,1\}, \; y_{ij}^S \geq  0, \qquad & \forall i  \in {\cal F},   \forall S \in {\cal C}^k, \forall j \in {S}. \\
\end{aligned}
\end{equation}


Note that the second-stage problem is a transportation problem and since the demand of a client is integral (0 or 1), the  optimal solution $y^S_{ij}$ is integral as well.
The special case of \eqref{urfl} where the uncertainty set contains only a single scenario corresponds to the NP-hard classical and well-studied uncapacitated facility location problem, which is hard to approximate within a constant better than 1.463 unless NP has an  $O(n^{O(\log \log n) })$-time algorithm \cite{guha1999greedy}.
We let  ({\sf LP-URFL}) denote the linear relaxation of \eqref{urfl},  where we replace $x_i \in \{0,1\}$ by $x_i \geq 0$ for each $i \in {\cal F}$.
We would like to note that, in our model, the number of scenarios could be exponential in the dimension of the problem. 
Hence, in general, even the linear relaxation of such a problem could be challenging. However, we show that  ({\sf LP-URFL})  can be solved in polynomial time using a  \textit{Static Assignment Policy} for the second-stage variables. Moreover, we can round the fractional solution losing only a constant factor, thereby  getting  a constant approximation for \eqref{urfl}. This section serves as a warm-up for introducing and motivating static assignment policies before addressing the class of capacitated robust facility location problems. 

\subsection{Static assignment policy}

Consider an optimal solution of \eqref{urfl}. Since each open facility can have an unlimited amount of supply, each client in the realized scenario is assigned to the closest facility among the opened ones.  Therefore, a client $j$ is always assigned to the same open facility in all scenarios $S$ where $j \in S$.   The same observation holds as well for ({\sf LP-URFL}) where each client is assigned to the same fractionally opened facilities independent of the realized scenario. Thus, the assignment of a client is static. This can be captured by the following policy.

\vspace{2mm}
\noindent
{\bf Static assignment policy.} There exists $y_{ij} \geq 0$ for each $ i \in {\cal F}, j \in {\cal C}$ such that
\begin{equation}
 \label{eq:policy}
 \forall {S} \in {\cal C}^k,  \forall i \in {\cal F},\forall j \in S, \qquad \qquad  y_{ij}^S=    y_{ij}. 
 \end{equation}


\noindent
{{\em Proof of Theorem \ref{thm:static-opt}}}. Let $(\mb x^*, \mb y^{*S}, S \in {\cal C}^k )$ be an optimal solution to  {\sf (LP-URFL)}. Since there are no capacities on facilities, each client $j$ is assigned to the closest fractionally opened facilities. In particular, for each $j \in {\cal C}$,
let $\pi_j$ be a permutation of ${\cal F}=\{1,\ldots,n\}$ such that  $ d_{\pi_j(1)j} \leq d_{\pi_j(2)j}  \leq \ldots \leq d_{\pi_j(n)j}$, and let
 $\ell = \min \{  p  \;   \vert  \;    x^*_{\pi_j(1)}+x^*_{\pi_j(2)}+ \ldots + x^*_{\pi_j(p)} \geq  1 \} $.
Denote $ \hat{x}_{\pi_j(\ell)} = 1- (x^*_{\pi_j(1)}+\ldots + x^*_{\pi_j({\ell-1)}}).$
The optimal solution can be written in the form \eqref{eq:policy} as follows: for $S \in {\cal C}^k$ and $j \in S$; ${y}_{ij}^S =   x^*_i$  for $i  \in \{\pi_j(1),\ldots,\pi_j(\ell-1) \}$, ${y}_{ ij}^S=  \hat{x}_i $  for $i  = \pi_j(\ell)$, and  ${y}_{ ij}^S=0 $, otherwise. 
\qed
Let ({\sf Static-URFL}) denote the  problem after  restricting the second-stage variables $y_{ij}^S$ in {\sf (LP-URFL)} to a policy  \eqref{eq:policy}, which can then be reformulated as follows:
\begin{equation} \tag{{\sf Static-URFL}}
\begin{aligned}
\min \; &  \sum_{i \in {\cal F}} c_i x_i  +\max_{ S \in {\cal C}^k} \sum_{i \in {\cal F}} \sum_{ j \in {\cal C}} {\bf 1} (j \in S) \cdot d_{ij} y_{ij} \\ 
 \text{s.t.    }\; & \sum_{i \in {\cal F}} y_{ij} \geq 1, \qquad & \forall j \in {\cal C}, \\
& x_i \geq y_{ij} \geq 0, \qquad  & \forall i  \in {\cal F},  \forall j \in {\cal C}. \\
\end{aligned}
\label{static-urfl}
\end{equation}

From Theorem \ref{thm:static-opt}, \eqref{static-urfl} is equivalent to {\sf (LP-URFL)}. The number of variables in \eqref{static-urfl} is reduced to a polynomial number since the $y_{ij}$ no longer depend  on the scenario $S$. The inner maximization problem is still taken over an exponential number of scenarios; however we can separate efficiently over these scenarios and write an efficient compact LP formulation for \eqref{static-urfl}: 
\begin{equation} \label{eq:dual}
\begin{aligned}
 \max_{S \in {\cal C}^k } \; \{ \sum_{i \in {\cal F}} \sum_{j \in  {\cal C}} {\bf 1} (j \in S) \cdot d_{ij} y_{ij}    \}   
    = \max_{\mb h \in [0,1]^{\vert {\cal C} \vert}} \; \{ \sum_{i \in {\cal F}} \sum_{j \in {\cal C}}  d_{ij} y_{ij} h_j  \;  \vert \;  \sum_{j \in {\cal C} } h_j \leq k \} \\
 =  \min_{\mu, \mb \omega \geq 0 } \; \{ k \mu + \sum_{j \in {\cal C}} \omega_j  \; \vert  \; \mu + \omega_j \geq  \sum_{i \in {\cal F}} d_{ij} y_{ij}, \; \forall j \in {\cal C} \},
\end{aligned}
\end{equation}
where the first equality holds because the optimal solution of the right maximization problem occurs at the extreme points of the $k$-ones polytope, which corresponds to the worst-case scenarios of ${\cal C}^k$ and the second equality follows from strong duality. Therefore, by dropping the min and introducing $\mu$ and all $  \omega_j$ as variables, we  reformulate \eqref{static-urfl} as the following  linear program:

\begin{equation} \label{eq:lp1} 
\begin{aligned}  
\min \; &  \sum_{i \in {\cal F}} c_i x_i + k \mu + \sum_{j \in {\cal C}} \omega_j \\
 \text{s.t.    }\; & \mu + \omega_j \geq  \sum_{i \in {\cal F}} d_{ij} y_{ij}, \qquad & \forall j \in {\cal C}, \\
 &  \sum_{i \in {\cal F}} y_{ij}\geq 1, \qquad & \forall j \in {\cal C},     \\
& x_i \geq y_{ij},    \qquad  & \forall i  \in {\cal F},  \forall j \in {\cal C}, \\
& x_i \geq 0, \; y_{ij} \geq 0, \;  \omega_j \geq 0, \; \mu \geq 0,   \qquad  & \forall i  \in {\cal F},  \forall j \in {\cal C}. 
\end{aligned}
\end{equation}


Finally, we  round  the solution of \eqref{static-urfl}   to an integral solution for \eqref{urfl} while losing  only a constant factor. This can be done using prior work on rounding techniques from the literature of deterministic facility location problems. In fact,  the LP rounding technique in Shmoys et al. \cite{shmoys1997approximation}, which gives a $4$-approximation algorithm to the deterministic uncapacitated problem also gives a $4$-approximation algorithm for \eqref{static-urfl}. The idea is to define a ball around each client of radius equal to the fractional assignment cost of the client (which is independent of any scenario for our static policy). Then, we open facilities in non-intersecting balls of ascending radius. The result is given in the following theorem and for completeness,  the details  of the rounding are in Appendix  \ref{appendix:A}.
\begin{theorem} \label{thm:rounding:uncapacitated}
\eqref{static-urfl}  can be rounded to give a 4-approximation to {\sf (URFL)}. 
\end{theorem}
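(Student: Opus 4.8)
The plan is to adapt the filtering-and-clustering rounding of Shmoys et al.~\cite{shmoys1997approximation} for the deterministic uncapacitated problem, exploiting the one feature our static policy provides for free: the fractional assignment cost $C_j := \sum_{i \in {\cal F}} d_{ij} y_{ij}$ of each client $j$ is \emph{independent of the scenario}. Because of this, the deterministic rounding, which produces a single set of open facilities together with a single (scenario-independent) assignment of each client to an open facility, can be analyzed client-by-client, and the resulting per-client guarantee transfers directly through the $\max_{S}$ in the objective.

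Fix a parameter $\alpha \in (0,1)$ and let $(\mb x, \mb y)$ be an optimal solution of \eqref{static-urfl}. First I would perform the standard filtering step: for each client $j$ let $g_j$ be the smallest radius with $\sum_{i : d_{ij} \le g_j} y_{ij} \ge \alpha$, and set $B_j = \{ i \in {\cal F} : d_{ij} \le g_j \}$. A Markov-type argument gives $g_j \le C_j/(1-\alpha)$, while by definition $\sum_{i \in B_j} x_i \ge \sum_{i \in B_j} y_{ij} \ge \alpha$. The crucial point is that $g_j$ and $B_j$ depend only on the static data $y_{ij}$, hence are scenario-independent. I would then cluster greedily in ascending order of $g_j$: processing clients from smallest to largest $g_j$, whenever the current client $j$ is not yet assigned I open the cheapest facility $i_0$ in $B_j$, declare $j$ a center, and assign to $i_0$ every still-unassigned $j'$ whose ball meets $B_j$ (i.e. $B_{j'} \cap B_j \ne \emptyset$); the chosen centers then have pairwise-disjoint balls.

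Two bounds follow. For the opening cost, disjointness of the centers' balls and $\sum_{i \in B_j} x_i \ge \alpha$ give that the cheapest facility in $B_j$ costs at most $\tfrac{1}{\alpha}\sum_{i \in B_j} c_i x_i$, so the total opening cost is at most $\tfrac{1}{\alpha}\sum_{i \in {\cal F}} c_i x_i$. For the assignment cost, a center $j$ pays at most $g_j \le C_j/(1-\alpha)$, while a non-center $j'$ attached to center $j$ satisfies $g_j \le g_{j'}$ (since $j$ was processed first), so by the triangle inequality through a facility $i^* \in B_j \cap B_{j'}$ its distance to $i_0$ is at most $d_{j'i^*} + d_{i^*j} + d_{ji_0} \le g_{j'} + 2g_j \le 3 g_{j'} \le 3 C_{j'}/(1-\alpha)$. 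Every client $j$ is therefore served at distance at most $3C_j/(1-\alpha)$, a \emph{uniform, scenario-independent} per-client bound. Choosing $\alpha = 1/4$ balances the factors: the opening cost is at most $4\sum_{i} c_i x_i$ and each client is served within $4C_j$. Assembling over scenarios, the rounded solution uses one integral $\mb x$ and one assignment, which is a feasible (hence suboptimal) static second stage for \eqref{urfl}; in any $S \in {\cal C}^k$ its second-stage cost is at most $4\sum_{j \in S} C_j$, so $\max_{S}\sum_{j\in S}(\text{served distance}) \le 4\max_{S}\sum_{j \in S} C_j$. Since the objective of \eqref{static-urfl} is exactly $\sum_{i} c_i x_i + \max_{S}\sum_{j \in S} C_j$, the rounded solution costs at most $4$ times the optimum of \eqref{static-urfl}, which by Theorem~\ref{thm:static-opt} equals $4$ times the optimum of {\sf (LP-URFL)}, and this is at most $4$ times the optimum of \eqref{urfl}.

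The main obstacle is not the rounding itself, which is classical, but ensuring the guarantee survives the worst-case maximization. The deterministic analysis ordinarily bounds the \emph{aggregate} assignment cost; here I need the stronger per-client bound $4C_j$ together with the fact that it is scenario-free, so that commuting the factor $4$ past $\max_S$ is legitimate. Establishing this per-client (rather than aggregate) bound and verifying it holds uniformly over all clients simultaneously is the step requiring the most care.
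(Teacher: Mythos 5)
Your proof is correct and is essentially the same argument as the paper's: both are the Shmoys--Tardos--Aardal filtering-plus-greedy-clustering rounding, exploiting the scenario-independent per-client fractional cost to push the factor through the $\max_S$. The only difference is cosmetic parametrization---you fix the ball's fractional mass at $\alpha\in(0,1)$ and bound its radius by $C_j/(1-\alpha)$, while the paper fixes the radius at $\alpha L_j$ with $\alpha\ge 1$ and lower-bounds the mass by $1-1/\alpha$; under $\alpha_{\text{paper}}=1/(1-\alpha_{\text{yours}})$ your choice $\alpha=1/4$ is exactly the paper's $\alpha=4/3$, yielding the identical $4$-approximation.
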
 
We would like to note that the focus in this section is not about finding the  best constant approximation  for {\sf (URFL)}, but we introduce it as a warm-up for motivating the static assignment policy before presenting our main result in the next section.

\section{Soft-capacitated robust facility location }

\subsection{Problem formulation}

In this section, we consider  the {\textit {soft-capacitated robust facility location}} ({\sf SCRFL}) which is similar to \eqref{urfl} except that each facility $i$ incurs a  linear supply cost,  where $c_i$ is the cost per unit of supply. We refer to $x_i$ as the supply (or capacity) in facility $i$. Each client in the realized scenario needs to be satisfied by one unit of supply. The problem is called soft-capacitated since there is no upper bound on  $x_i$, ($x_i \in \mathbb{Z_+})$. The problem is given by the following integer program:
\begin{equation}\label{scrfl} \tag{\sf SCRFL}
\begin{aligned}
\min \; &  \sum_{i \in {\cal F}} c_i x_i +\max_{ S \in {\cal C}^k} \sum_{i \in {\cal F}} \sum_{ j \in {S}}  d_{ij} y_{ij}^S &\\
\text{s.t.    } & \sum_{i \in {\cal F}} y_{ij}^S \geq 1, \qquad & \forall S \in {\cal C}^k , \forall j \in {S},     \\
& x_i \geq \sum_{j \in S} y_{ij}^S,\qquad  & \forall i  \in {\cal F},   \forall S \in {\cal C}^k, \forall j \in {S}, \\
& x_i \in \mathbb{N}, \; y_{ij}^S \geq  0, \qquad & \forall i  \in {\cal F}, \forall S \in {\cal C}^k, \forall j \in {S} . \\
\end{aligned}
\end{equation}

We let  ({\sf LP-SCRFL}) denote the linear relaxation of \eqref{scrfl},  where we replace $x_i \in \mathbb{N}$ by $x_i \geq 0$, for each $i \in {\cal F}$. We would like to note that even the linear relaxation ({\sf LP-SCRFL}) is challenging to solve since it has exponentially-many variables (scenarios).  Unlike the uncapacitated case, the static assignment policy \eqref{eq:policy} is not  optimal for ({\sf LP-SCRFL}) and the optimal assignment for each client depends, in general, on the realized scenario. In particular, the same client could be assigned to  different facilities in  different scenarios. In contrast, we show  the surprising result that a static assignment policy gives $O( \log k/ \log \log k)$-approximation  to ({\sf LP-SCRFL}). Moreover,  we can round the solution of the static assignment policy to an integral solution for \eqref{scrfl} and  only lose an additional constant factor. 
We  let ({\sf Static-SCRFL}) denote the problem when we  restrict the second-stage variables $y_{ij}^S$ in ({\sf LP-SCRFL}) to static assignment policies  \eqref{eq:policy}. The problem  can then be reformulated as follows:

\begin{equation}\label{static-scrfl} \tag{\sf Static-SCRFL}
\begin{aligned}
\min \; &  \sum_{i \in {\cal F}} c_i x_i +\max_{ S \in {\cal C}^k} \sum_{i \in {\cal F}} \sum_{ j \in {\cal C}} {\bf 1} (j \in S) \cdot  d_{ij} y_{ij} &\\
\text{s.t.    } & \sum_{i \in {\cal F}} y_{ij} \geq 1, \qquad & \forall j \in {\cal C},  \\
& x_i \geq  \max_{ S \in {\cal C}^k}     \; \;  \sum_{ j \in  {\cal C}} {\bf 1} (j \in S)    \cdot y_{ij}, \qquad & \forall i \in {\cal F}, \\
& x_i \geq 0, \; y_{ij} \geq  0, \qquad & \forall i  \in {\cal F},  \forall j \in {\cal C}. \\
\end{aligned}
\end{equation}


\subsection{An $O ( \frac{ \log k} { \log \log k} )$-approximation algorithm}

Our main contribution in this section is to show that a static assignment policy \eqref{eq:policy}  gives $O(\log k / \log \log k)$-approximation  for {\sf (LP-SCRFL)} (Theorem~\ref{thm:static:log}).
To prove this theorem, we consider an optimal solution of ({\sf LP-SCRFL}) and massage it to construct a solution of the form  \eqref{eq:policy} while losing  $O( \log k / \log \log k)$ factor. We first present our construction and several structural lemmas and then give the proof of Theorem \ref{thm:static:log}.

\vspace{2mm}
\noindent
{\bf Our construction.}
Let $\mb {x^*}: (x^*_i)_{ i \in {\cal F}}$ be an optimal first-stage solution of ({\sf LP-SCRFL}), let $\opt_1$ be the corresponding optimal first-stage cost and let $\opt_2$ be the corresponding optimal second-stage cost. We will classify the clients ${\cal C}$ into three subsets $C_1,C_2,C_3$ using Procedure \ref{alg-soft} (below) and then specify a static assignment policy for each subset. We use the following notation in the procedure.
Let  $\alpha > 1$ and $r= 5 \cdot \opt_2/ k$. For $\ell \geq 1$ and $j \in {\cal C}$, we let $B_j^{\ell}$ denote the ball centered at client $j$ of radius $\ell r$. We initialize the sets $F \leftarrow {\cal F}$ and $C \leftarrow {\cal C}$ and update them at iteration, as explained in the procedure, until $C$ becomes empty. We let $Cl(B)$ denote the set of clients in $C$ that are inside the ball $B$ and let $Sp(B)$ denote  the total optimal supply of facilities $F$ that are inside the ball $B$, i.e.,
$$ Sp(B) = \sum_{i \in  { F}} {\bf 1} (i \in B) \cdot x_i^* \qquad  \text{and} \qquad Cl(B) =  \{ j \in  { C} \; \vert \; j \in   B \}. $$
Note that both $ Sp(B)$ and $Cl(B) $ depend on the current sets of facilities $F$ and clients $C$, which we update at each iteration of the while loop in the procedure. But for the ease of notation, we do not refer to  them  with the indices $F$ and $C$. 

In the procedure, while the set $C$ is not empty, we pick a client $j \in C$ and grow three balls around it: $B_j^{2 \ell - 1}$ ({\em internal} ball), $B_j^{2 \ell }$ ({\em medium} ball) and $B_j^{2 \ell + 1}$ ({\em external} ball) starting with $\ell=1$. For each $\ell$, we check if the number of clients in the internal ball $B_j^{2 \ell - 1}$ is greater than $k$ (line 4);  if this is the case, we remove them from $C$, put them in $C_1$ and restart in line 2. If not, we check if the supply in the medium ball $B_j^{2 \ell}$ is  sufficient to satisfy half of the clients in the internal ball $B_j^{2 \ell - 1}$ (line 7); if that is not the case, we remove those clients from $C$, put them in $C_2$, and restart in line 2. Otherwise, we finally check if the supply in the the medium ball $B_j^{2 \ell}$ is sufficient to satisfy a fraction $1/2 \alpha$ of the clients in the external ball  $B_j^{2 \ell+1}$ (line 10); if that is the case we remove all the clients in $B_j^{2 \ell+1}$ and put them in $C_3$, we also  remove all the facilities in $B_j^{2 \ell}$ and restart in line 2. If none of these three conditions holds, we increase $\ell$ to $\ell+1$. First, we show that after at most $\log_{\alpha} k$ increments (i.e., $\ell \leq \log_{\alpha}k $), one of three conditions must hold and therefore we will remove some clients from $C$ and restart in line 2. Which implies that after a finite number of iterations, the set $C$ becomes empty. In particular, We have the following lemma.




\floatname{algorithm}{Procedure}

\begin{algorithm*}[t]
\caption{}\label{alg-soft}
\begin{algorithmic}[1]
\State  Initialize  $C \leftarrow {\cal C}, C_1 \leftarrow \emptyset, C_2 \leftarrow \emptyset, C_3 \leftarrow \emptyset, F \leftarrow {\cal F} $.
\While{$ { C} \neq \emptyset$ }
\State Pick a client $j \in C$. Initialize $\ell=1$
\If {  $  \vert   Cl(B_j^{2 \ell -1}) \vert \geq k$    }
\State $C_1  \leftarrow C_1 \cup Cl(B_j^{2 \ell -1})$, \; $ { C} \leftarrow  { C} \setminus Cl(B_j^{2 \ell -1})$
\State Stop, return to line 2
\EndIf
\If {  $  Sp(B_j^{2 \ell })         <  \frac{1}{2}  \cdot \vert   Cl(B_j^{2 \ell - 1})  \vert $    }
\State $C_2 \leftarrow C_2 \cup Cl(B_j^{2 \ell - 1})$, \;  $ { C} \leftarrow   {C} \setminus Cl(B_j^{2 \ell - 1})$
\State Stop, return to line 2
\EndIf
\If {  $  Sp(B_j^{2 \ell })         \geq   \frac{1}{2 \alpha}   \cdot     \vert   Cl(B_j^{2 \ell +1})  \vert $    }
\State $C_3 \leftarrow C_3 \cup Cl(B_j^{2 \ell +1})$, \; $ { C} \leftarrow  { C} \setminus Cl(B_j^{2 \ell +1})$, 
\State$ { F} \leftarrow { F} \setminus \{ i \in { F} \; \vert \; i \in  B_j^{2 \ell } \}$
\State Stop, return to line 2
\Else \;  {$ \ell \leftarrow \ell +1$, return to line 4.} 
\EndIf
\EndWhile
\end{algorithmic}
\end{algorithm*}

\begin{lemma} \label{lem:steps}
In Procedure   \ref{alg-soft},  after a finite number of iterations, the set $C$  becomes empty and $C_1 \cup C_2 \cup C_3$ is equal to $ {\cal C}$. Moreover,   $\ell$ is always less than $  \log_{\alpha} k $.
\end{lemma}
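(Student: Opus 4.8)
The plan is to prove the two assertions separately: first, that each pass of the \textbf{while} loop removes at least one client from $C$ and deposits it into exactly one of $C_1, C_2, C_3$, which immediately yields both termination of the outer loop and $C_1 \cup C_2 \cup C_3 = \mathcal{C}$; and second, that each pass itself halts because the index $\ell$ cannot grow beyond $\log_\alpha k$. The first part is essentially bookkeeping. Whenever any of the three conditions (lines 4, 7, 10) fires, the clients removed from $C$ all lie in a ball centered at the currently chosen $j \in C$; since $j$ itself belongs to every such ball and $j \in C$, at least one client (namely $j$) is removed and placed in $C_1$, $C_2$, or $C_3$. Removals are permanent — $Cl(\cdot)$ only ever counts clients still in $C$ — so no client is re-examined, the three output sets receive disjoint batches, and after at most $|\mathcal{C}|$ passes the set $C$ is empty with $\mathcal{C} = C_1 \cup C_2 \cup C_3$.

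The crux is to show that within a single pass the inner loop stops after at most $\log_\alpha k$ increments of $\ell$, and here I would argue via geometric growth of the internal-ball client count. Fix the pass and write $n_\ell := |Cl(B_j^{2\ell-1})|$; note $n_1 \geq 1$ since $j \in Cl(B_j^1)$, and observe the key structural fact that the internal ball at step $\ell+1$ is exactly the external ball at step $\ell$, i.e. $B_j^{2(\ell+1)-1} = B_j^{2\ell+1}$, so that $n_{\ell+1} = |Cl(B_j^{2\ell+1})|$. Suppose step $\ell$ is reached without stopping. Then condition 1 fails, giving $n_\ell < k$; condition 2 fails, giving $Sp(B_j^{2\ell}) \geq \tfrac{1}{2} n_\ell$; and condition 3 fails, giving $Sp(B_j^{2\ell}) < \tfrac{1}{2\alpha}|Cl(B_j^{2\ell+1})| = \tfrac{1}{2\alpha} n_{\ell+1}$. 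Chaining the last two inequalities gives $\tfrac{1}{2} n_\ell < \tfrac{1}{2\alpha} n_{\ell+1}$, hence $n_{\ell+1} > \alpha\, n_\ell$. Thus every increment multiplies the internal-ball count by more than $\alpha$, so $n_\ell > \alpha^{\ell-1} n_1 \geq \alpha^{\ell-1}$. Since any step at which we increment had condition 1 fail, we have $n_\ell < k$, and therefore $\alpha^{\ell-1} < k$, i.e. $\ell = O(\log_\alpha k)$, matching the claimed bound; equivalently, once $\alpha^{\ell-1} \geq k$ the internal ball already contains at least $k$ clients and condition 1 must fire. This bounds $\ell$ and in particular makes the inner loop finite, which combined with the first part completes the proof.

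The main obstacle is precisely the geometric-growth step, and within it the observation that the external ball of one round becomes the internal ball of the next: this is what lets the simultaneous failure of conditions 2 and 3 compound across rounds into a clean factor-$\alpha$ multiplicative increase in $n_\ell$. Everything else — coverage, disjointness, and the at-most-$|\mathcal{C}|$ bound on outer iterations — is routine, provided one keeps track that $Cl$ and $Sp$ are always evaluated against the current, shrinking sets $C$ and $F$ rather than their initial values.
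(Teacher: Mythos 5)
Your proof is correct and follows essentially the same argument as the paper: chaining the failure of the second and third conditions into the geometric growth $|Cl(B_j^{2\ell+1})| > \alpha\,|Cl(B_j^{2\ell-1})|$, using the fact that the external ball of one round is the internal ball of the next to force condition 1 to fire within $O(\log_\alpha k)$ increments, and noting that each pass of the while loop removes at least the chosen client $j$ to get termination and $C_1 \cup C_2 \cup C_3 = \mathcal{C}$. The only cosmetic difference is that your bound reads $\alpha^{\ell-1} < k$ rather than the paper's $\alpha^{\ell} < k$, an off-by-one that is immaterial to the $O(\log_\alpha k)$ claim.
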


\begin{proof}
Fix a client $j$ and let $\ell \geq 1$. If none of the three conditions (``if'' statements) holds then 
$$     \alpha \cdot \vert   Cl(B_j^{2 \ell -1}) \vert   \leq 2 \alpha \cdot Sp(B_j^{2 \ell })         <          \vert   Cl(B_j^{2 \ell +1})  \vert . $$
Therefore, the number of clients grows geometrically when we increase the radius of the balls and by induction, we have that
$$ \alpha^{\ell}  \leq  \alpha^{\ell} \cdot \vert   Cl(B_j^{1}) \vert   <         \vert   Cl(B_j^{2 \ell +1}) \vert, $$
where $\vert   Cl(B_j^{1}) \vert \geq 1$, since  $Cl(B_j^{1})$  contains at least the client $j$.
Hence, after at most $\log_{    \alpha} k$ increments, we will reach $k$ clients,  and must stop by the first condition, and return to line 2. Hence, we always have $\ell \leq \log_{\alpha} k $. Finally since, we remove at least one client at each iteration of the while loop,  the set $C$ becomes empty after at most $\vert {\cal C} \vert$ iterations, and finally $C_1 \cup C_2 \cup C_3= {\cal C}$.
\qed
\end{proof}

Now we are ready to present our static assignment policy for {\sf(LP-SCRFL)}. The following three lemmas show our constructed static assignment for each client in the three subsets $C_1,C_2,C_3$. Moreover, we specify the supply used to satisfy each subset of these clients and present the analysis for the  assignment cost. 

For each client in the set $C_1$, we know that it belongs to a ball with at least $k$ clients. By the feasibility of the optimal solution, this implies that there exists $k$ units of supply in $\mb x^*$ ``close'' to this ball. Hence, we  satisfy this client by using the same fraction $1/k$ of  $\mb x^*$ (static assignment)  while paying a small assignment cost (roughly a constant times the radius of the ball). Since, there are at most $k$ clients in each scenario, and each one is using at most $\mb x^*/k$, we  need to dedicate only one $\mb x^*$ for all clients in $C_1$.  Formally, we have the following lemma.

\begin{lemma} There exists a static assignment policy for $C_1$ such that each client in $C_1$ is using at most the supply $\mb {x^*}/k$  and has an assignment cost less than  $O(\log_{\alpha}k/k ) \cdot \opt_2 $, i.e., there exists $(\tilde{y}_{ij})_{i \in {\cal F}, j \in { C_1}}$ such that  for each $j \in C_1$ :
$$         \sum_{i \in {\cal F}} \tilde{y}_{ij} \geq 1,  \qquad   \frac{x_i^*}{k}   \geq  \tilde{y}_{ij} \geq 0 , \; \forall i \in {\cal F},
\; \; \; \;\text{and} \; \; \; \; \sum_{i \in {\cal F}} d_{ij} \tilde{y}_{ij} = O(\log_{\alpha}k ) \cdot \frac{\opt_2}{k}.$$
\label{lem:C1}
\end{lemma}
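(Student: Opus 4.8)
The plan is to construct the static assignment $\tilde y$ for clients in $C_1$ by exploiting the defining property of $C_1$: each such client $j$ was placed into $C_1$ because, at the iteration it was removed, its internal ball $B_j^{2\ell-1}$ contained at least $k$ clients (of the \emph{current} set $C$), for some $\ell \le \log_\alpha k$. I first want to lower bound the total supply available near $j$. The key observation is that if $B_j^{2\ell-1}$ contains at least $k$ clients, then by feasibility of $\mb{x^*}$ there must be at least $k$ units of supply not too far away; concretely, consider the scenario $S$ consisting of (up to) $k$ of these clients. Each of them must be fractionally served by $\mb{x^*}$, and any unit of supply serving a client inside $B_j^{2\ell-1}$ cannot be arbitrarily far, or else it would contribute a large assignment cost. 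I would make this precise by a charging/averaging argument: the total second-stage cost for the worst-case scenario is at most $\opt_2$, so the supply used to serve these $k$ clients cannot all sit at distance more than $O(\opt_2/k) = O(r)$ from the cluster; using a Markov-type bound, a constant fraction (say at least $k/2$) of the required supply lies within a ball of radius $O(\ell r)$ around $j$.

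Once I have established that there are at least (a constant fraction of) $k$ units of $\mb{x^*}$-supply inside a ball $B_j^{O(\ell)}$ of radius $O(\ell r) = O(\log_\alpha k)\cdot r$, I define $\tilde y_{ij}$ for $j \in C_1$ by simply using the fraction $x_i^*/k$ of each facility $i$ in that ball, scaled so that $\sum_i \tilde y_{ij} \ge 1$. Since the available supply in the ball is at least a constant times $k$, setting $\tilde y_{ij} = x_i^*/k$ (possibly on a $\Theta(k)$-sized supply, truncating to exactly mass $1$) both satisfies $\sum_i \tilde y_{ij}\ge 1$ and respects $\tilde y_{ij}\le x_i^*/k$. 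The assignment cost is then bounded by the radius of the ball: each $\tilde y_{ij}$ places mass on facilities at distance $O(\ell r) = O(\log_\alpha k)\cdot r$ from $j$, so
\[
\sum_{i\in{\cal F}} d_{ij}\,\tilde y_{ij} \le O(\log_\alpha k)\cdot r \cdot \sum_i \tilde y_{ij} = O(\log_\alpha k)\cdot r,
\]
and since $r = 5\,\opt_2/k$, this is exactly $O(\log_\alpha k)\cdot \opt_2/k$, as required.

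The per-client supply bound $\tilde y_{ij}\le x_i^*/k$ is what makes the policy globally feasible: in any scenario $S\in{\cal C}^k$ at most $k$ clients are active, and each active client in $C_1$ draws at most $x_i^*/k$ from facility $i$, so the total draw on $i$ from $C_1$-clients is at most $k\cdot (x_i^*/k)=x_i^*$; thus a single copy of $\mb{x^*}$ suffices for all of $C_1$. I would state this feasibility remark but note the lemma itself only asserts the three displayed per-client properties, so the main work is the per-client construction.

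The main obstacle, and the step I would spend the most care on, is making rigorous the claim that a constant fraction of the $k$ required supply units lies within a ball of radius $O(\ell r)$ centered at $j$. This requires carefully choosing the adversarial scenario $S$ (the clients in $B_j^{2\ell-1}$), invoking the feasibility constraints $\sum_i y_{ij'}^S \ge 1$ and $x_i^* \ge \sum_{j'\in S} y_{ij'}^S$ on the optimal solution, and then using the bound on $\opt_2$ together with the triangle inequality to argue that supply serving these clients from far away would force the second-stage cost above $\opt_2$. The constant $r = 5\,\opt_2/k$ is presumably tuned precisely so that the Markov bound yields at least $k/2$ units of nearby supply; I would verify that this choice of constant is what drives the factor $5$ and lets the averaging argument go through.
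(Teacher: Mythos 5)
Your setup --- fixing the scenario $S$ of $k$ clients inside the internal ball and invoking feasibility of the optimal solution on $S$ --- is the right starting point, and your closing remark (each client of $C_1$ drawing at most $x_i^*/k$ means one copy of $\mb{x^*}$ serves all of $C_1$) is correct. But the central step has a genuine gap: a Markov-type localization argument cannot deliver what your construction needs. From $\sum_{i,p} d_{ip}\, y^S_{ip} \le \opt_2$ you can only conclude that the $\mb{x^*}$-supply within distance $\beta$ of the cluster is at least $k - \opt_2/\beta$, which is \emph{strictly less than} $k$ for every finite $\beta$. This is not an artifact of the proof: consider an instance where one unit of supply sits at distance $D$ from the cluster and only $k-1$ units sit inside it; then $\opt_2 \ge D$, so $D = \Theta(k r)$, and no ball of radius $O(\log_\alpha k)\cdot r$ around $j$ can contain more than $k-1$ units. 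Consequently, setting $\tilde y_{ij} = x_i^*/k$ only on facilities in such a ball gives $\sum_i \tilde y_{ij} \le 1 - 1/k < 1$. Your parenthetical ``truncating to exactly mass $1$'' cannot repair this (truncation only decreases mass), and rescaling by the deficit violates the cap $\tilde y_{ij} \le x_i^*/k$. The two requirements $\sum_i \tilde y_{ij} \ge 1$ and $\tilde y_{ij} \le x_i^*/k$ jointly force the support of $\tilde y_{\cdot j}$ to carry a full $k$ units of $\mb{x^*}$-supply, which no bounded-radius argument can guarantee.

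The paper's proof sidesteps localization entirely: it takes $\tilde y_{ij} = \frac{1}{k}\sum_{p\in S} y^S_{ip}$, the \emph{average} of the optimal assignments of the $k$ clients of $S$. The cap $\tilde y_{ij} \le x_i^*/k$ is then immediate from the capacity constraint $x_i^* \ge \sum_{p\in S} y^S_{ip}$, coverage $\sum_i \tilde y_{ij} \ge 1$ follows by summing the constraints $\sum_i y^S_{ip}\ge 1$, and the cost is handled by the triangle inequality $d_{ij} \le d_{ip} + d_{pj}$, giving $\sum_i d_{ij}\tilde y_{ij} \le \frac{1}{k}\sum_{i,p} d_{ip} y^S_{ip} + \frac{1}{k}\sum_p d_{pj} \le \opt_2/k + 2(2\ell-1)r = O(\log_\alpha k)\cdot \opt_2/k$. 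Note how far-away supply causes no trouble here: its transportation cost is already paid for, on average, by the $\opt_2/k$ term, so there is no need for it to be near the ball. If you insist on your uniform-on-a-ball construction, the best you could prove is the lemma with the relaxed cap $(1+\epsilon)x_i^*/k$ (which would still suffice for Theorem~2 after adjusting constants), but the lemma as stated requires the averaging argument, not a localization one.
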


\begin{proof}
Let $j$ be a client of $C_1$. It is sufficient to show that the following minimization problem is feasible and its optimal cost is  $O(\log_{\alpha}k/k ) \cdot \opt_2 $. Consider

\begin{equation} \label{eq:ss}
\min \; \left\{ \sum_{i \in {\cal F}} d_{ij} y_{ij} \; \bigg\vert \;        \sum_{i \in {\cal F}} y_{ij}  \geq  1, \; \; \frac{x_i^*}{k}       \geq  y_{ij} \geq 0 ,\; \; \forall i \in {\cal F} \right\} .
\end{equation}
Problem \eqref{eq:ss} must be feasible since the total supply in $\mb x^*$ is greater than the total demand in any scenario, i.e., $ \sum_{i \in {\cal F}} x_i^* \geq k$.
Recall that a client $j$ in $C_1$  belongs to one of the sets $Cl(B_{t}^{2 \ell - 1})$ for some  $t \in{\cal C}$  and $\ell \leq \log_{\alpha} k $ (Lemma \ref{lem:steps}) such that $\vert Cl(B_{t}^{2 \ell - 1}) \vert \geq k$. Consider a scenario $S$ formed by $k$ clients from $Cl(B_{t}^{2 \ell - 1})$. Let denote $\mb y^S$ the assignment of scenario $S$ in the optimal solution. Consider the following candidate solution for \eqref{eq:ss}:

$$ y_{ij}= \frac{1}{k} \cdot \sum_{ p \in S} y^S_{ip}, \; \forall i \in{\cal F}.$$
We have, by the feasibility of the optimal solution, for each $i \in {\cal F}$,
$ 0 \leq y_{ij} \leq  \frac{1}{k} x_i^* $
and
$$\sum_{i \in {\cal F}} y_{ij}  =   \frac{1}{k} \cdot       \sum_{i \in {\cal F}} \sum_{ p \in S} y^S_{ip}  \geq   \frac{1}{k} \sum_{ p \in S} 1 = 1.   $$
Therefore, our solution is feasible for \eqref{eq:ss}. Moreover,  we have
\begin{align*}
 \sum_{i \in {\cal F}} d_{ij}  y_{ij}  &= \frac{1}{k} \cdot \sum_{i \in {\cal F}} \sum_{ p \in S} d_{ij} y^S_{ip} \\
 & \leq   \frac{1}{k} \cdot  \sum_{i \in {\cal F}}  \sum_{ p \in S}  d_{ip} y^S_{ip}+ \frac{1}{k} \cdot \sum_{ p \in S}  d_{pj}         \\
 & \leq \frac{1}{k} \cdot \opt_2   + 2(2\ell -1) r \\
 & \leq \frac{\opt_2}{k}  + 2(2 \log_{\alpha}k      -1) \cdot 5 \cdot \frac{\opt_2}{k}  = O(\log_{\alpha}k ) \cdot \frac{\opt_2}{k},
\end{align*}
where the first inequality follows from the triangle inequality and the fact that 
$\sum_{i \in {\cal F}}  y_{ip}^S=1$ for all $p \in S$ in the optimal solution. For the second inequality, we use the definition of $\opt_2$ to bound the first term,  the second term $d_{pj}$ is bounded by the diameter of the ball $B_j^{2 \ell -1}$ which contains client $j$ and  all clients $p \in S$.
\qed
\end{proof}

Now, consider the set $C_2$. By construction, these are clients such that there is not  enough  supply  within a distance $r= 5\opt_2/k$ to satisfy half of them. Therefore, intuitively they need to pay ``large'' distances  in the optimal assignment cost if they show up all together in the same scenario. In the following lemma, we show that we can  have no more than $k$ of these clients. As we would show later, this would imply that we can dedicate a supply $\mb x^*$ to $C_2$ and make a static assignment of all the clients $C_2$ to this $\mb x^*$. 
\begin{lemma} The set $C_2$ has at most  $k$ clients. 
\label{lem:C2}
\end{lemma}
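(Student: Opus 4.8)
The plan is to argue by contradiction: assuming $|C_2| > k$, I will exhibit a single demand scenario $S \in {\cal C}^k$ whose optimal second-stage assignment cost strictly exceeds $\opt_2$, contradicting the definition of $\opt_2$. Throughout I write $r = 5\opt_2/k$ and record that $C_2$ is partitioned into the \emph{batches} $Cl(B_c^{2\ell-1})$ removed at the iterations that trigger line~8. Each such batch has size strictly less than $k$ (since line~4 did not fire at that iteration) and satisfies $Sp(B_c^{2\ell}) < \tfrac12\,|Cl(B_c^{2\ell-1})|$, where $Sp$ is measured with respect to the facility set $F$ current at that iteration.

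The heart of the argument is a locality claim: for a client $p$ in a batch with center $c$, level $\ell$, and current facility set $F$, every facility within distance $r$ of $p$ lies in $F \cap B_c^{2\ell}$. Containment in $B_c^{2\ell}$ is the triangle inequality, since $d(p,c)\le(2\ell-1)r$. The delicate point is that such a facility has not already been deleted by an earlier $C_3$ step. Here I would use that a $C_3$ step centered at $j'$, level $\ell'$, deletes facilities inside $B_{j'}^{2\ell'}$ but simultaneously removes \emph{all} remaining clients inside the strictly larger ball $B_{j'}^{2\ell'+1}$; hence any client surviving that step has $d(\cdot,j') > (2\ell'+1)r$, and is therefore at distance more than $(2\ell'+1)r - 2\ell' r = r$ from every facility deleted there. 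Since $p$ is removed from $C$ only when its own batch is processed, $p$ survives every $C_3$ step preceding it, so no facility within distance $r$ of $p$ could have been deleted. Consequently the total $x^*$-supply within distance $r$ of a batch is at most $Sp(B_c^{2\ell}) < \tfrac12$ times that batch's size.

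Granting this, I would choose $S$ to be a union of \emph{whole} batches with $|S|\in(2k/5,\,k]$; such a sub-collection exists whenever $|C_2|>k$, by a prefix-sum argument (if the first prefix exceeding $2k/5$ already exceeds $k$, the batch causing the jump has size in $(3k/5,k)$ and can serve as $S$ by itself, using that every batch has size $<k$). Summing the locality claim over the batches composing $S$ and using feasibility $\sum_{p\in S} y^S_{ip}\le x_i^*$ then shows that in the optimal assignment strictly less than $\tfrac12|S|$ units of demand can be served within distance $r$ of their clients; hence more than $\tfrac12|S|$ units travel a distance exceeding $r$, so the second-stage cost of $S$ exceeds $\tfrac{r}{2}|S| > \tfrac{r}{2}\cdot\tfrac{2k}{5} = \tfrac{rk}{5} = \opt_2$, the desired contradiction, giving $|C_2|\le k$. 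I expect the main obstacle to be the bookkeeping in the locality claim, namely certifying that every facility deleted during a $C_3$ step is provably far from every client later placed in $C_2$, since this is precisely what prevents the optimal solution from cheaply serving $C_2$-clients with already-removed supply; the constant-chasing needed to extract a feasible-size scenario out of complete batches is a secondary but necessary step.
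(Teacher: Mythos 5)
Your proof is correct and follows essentially the same route as the paper's: the same contradiction setup, the same key locality claim (facilities deleted in $C_3$ steps lie at distance more than $(2\ell'+1)r-2\ell' r=r$ from every surviving client, so the supply within distance $r$ of each $C_2$ batch is less than half its size), and the same style of cost lower bound against $\opt_2$ with $r=5\opt_2/k$. The only difference is cosmetic bookkeeping at the end: the paper sorts batches by decreasing cardinality and fills a scenario to exactly $k$ clients (discarding the partial last batch, which leaves at least $k/2$ whole-batch clients), whereas you select whole batches of total size in $(2k/5,k]$; both choices force a second-stage cost strictly exceeding $\opt_2$.
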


\begin{proof}
Suppose, for the sake of contradiction, that $\vert C_2 \vert >  k$. Let $ G_1, G_2, \ldots, G_T$ be the disjoint  subsets of clients added at each iteration in the construction of $C_2$ in Procedure \ref{alg-soft}. In particular,  $C_2= G_1 \cup G_2 \cup \ldots \cup G_T$ for some $T$ where:
\begin{itemize}
    \item [(i)] for $t=1,2,\ldots,T, $ $G_t=Cl(B_{j_t}^{2 \ell_t-1})$ for some  client $j_t$ and  $  1  \leq \ell_t  \leq \log_{\alpha} k $.
    \item  [(ii)] the supply $Sp(B_{j_t}^{2 \ell_t})$ is less than half of the clients in $G_t$.
    \item [(iii)] each set $G_t$ has strictly less than $k$ clients, since the procedure has to fail the first ``if'' statement before adding $G_t$ into $C_2$. 
\end{itemize}
Recall that $$ Sp(B_{j_t}^{2 \ell_t }) = \sum_{i \in  { F}} {\bf 1} (i \in B_{j_t}^{2 \ell_t }) \cdot x_i^*, $$ 
where $F$ is the current set of facilities in the procedure (and is not all $\cal F$ since some facilities have been removed in line 12 of the procedure). However, we would like to emphasize that when a  facility has been removed (in line 12 of the procedure),  all clients within distance  $r$ from this facility were removed as well (line 11). This is true, since  when we remove the facilities in a medium ball, say $B_j^{2 \ell }$, (line 12), we remove all  clients in the corresponding external ball $B_j^{2 \ell+1 }$ (line 11). Hence, the remaining clients in $C$ are at least $(2\ell+1)r - ( 2 \ell)r =r$ away from the removed facilities. In particular, for the clients $G_t$, the supply that has been removed before they were added to $C_2$ is at least $r$ away from them. Therefore, all the facility in $\cal F$ that are within a distance $r$ from a client in $G_t$ belong to the set $F$ that verifies $\sum_{i \in  { F}} {\bf 1} (i \in B_{j_t}^{2 \ell_t }) \cdot x_i^* \leq 2 \cdot \vert G_t \vert $. This implies that the  supply of all facilities within a distance $r$ from $G_t$ in the optimal solution, is less than half of the clients $G_t$. Hence, if all of the clients $G_t$ show up in a scenario,  the optimal second-stage solution needs to pay an assignment cost of at least $r \cdot \vert G_t \vert /2$.

Order the sets $G_t$ according to their cardinalities: wlog assume that $\vert G_1 \vert \geq \vert G_2 \vert \geq \ldots \geq \vert G_T \vert$.  
We construct a scenario $\hat{S}$ by taking clients from the sets $G_1$, $G_2, \ldots$ until we hit $k$. This is possible since by assumption $\vert C_2 \vert >  k$. Assume that
$$ \vert G_1 \vert + \vert G_2 \vert + \ldots \vert G_{p-1}  \vert    +  \vert \bar{G}_{p}  \vert = k ,$$
for some $p$, where $ 2 \leq p \leq T$. Note that $\bar{G}_{p}$ is a subset of ${G}_{p}$, since we can reach $k$ before taking all the clients of the last set ${G}_{p}$.  
For each $t=1, \ldots, p-1$, the optimal second-stage decision needs to pay at least $r \cdot \vert G_t \vert /2$. Therefore,
 

  $$ \opt_2 \geq  \frac{1}{2} \cdot  r \cdot ( \vert G_1 \vert + \vert G_2 \vert + \ldots \vert G_{p-1}  \vert   ) .$$
We did not include  $  G_{p} $, since not all these clients  are necessary in the scenario $\hat{S}$, but only $\vert \bar{G}_{p}  \vert$ of them. Since $\bar{G}_{p}$ has the smallest cardinality
$$  \vert G_1 \vert + \vert G_2 \vert + \ldots \vert G_{p-1}  \vert \geq     \frac{1}{2} ( \vert G_1 \vert + \vert G_2 \vert + \ldots \vert G_{p-1}  \vert    +  \vert \bar{G}_{p}  \vert )  = \frac{k}{2}. $$
Therefore,
$$  \opt_2 \geq      r \cdot \frac{k}{4}   =   5 \cdot \frac{\opt_2}{4}  ,   $$
  which is a contradiction.
 Therefore, $\vert C_2 \vert \leq k$. 
 \qed
\end{proof}

Finally, for clients $C_3$, we show that there exists $ \vert C_3 \vert /2\alpha$ units of supply ``close'' to them. In particular, we can  multiply these units by $2 \alpha$, dedicate them to $C_3$ and make a static assignment for $C_3$. We have the following lemma.

\begin{lemma}
There exists a supply $\mb {\hat{x}}$  that has a cost at most  $2 \alpha \cdot \opt_1$ and there exists a static assignment policy such that all clients in $C_3$  are assigned to  supply $\mb {\hat{x}}$ and each client in $C_3$ has an assignment cost that is  $O(\log_{\alpha}k /k ) \cdot \opt_2$, i.e., there exists $(\hat{y}_{ij})_{i \in {\cal F}, j \in { C_3}}$ and $(\hat{x}_i)_{i \in{\cal F}}$
 such that  for all $j \in C_3$ :
$$         \sum_{i \in {\cal F}} \hat{y}_{ij} \geq 1,   \qquad  \hat{x}_i  \geq \sum_{j \in C_3} \hat{y}_{ij},\; \forall i \in {\cal F},   \qquad    \hat{x}_i \geq 0, \; \hat{y}_{ij} \geq 0 , \; \forall i \in {\cal F},$$
$$\sum_{i \in {\cal F}} c_i   \hat{x}_i \leq 2 \alpha  \cdot \opt_1 \qquad 
\text{and} \qquad  \sum_{i \in {\cal F}} d_{ij} \hat{y}_{ij} = O(\log_{\alpha}k ) \cdot \frac{\opt_2}{k}.
$$
 \label{lem:C3}
\end{lemma}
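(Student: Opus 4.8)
The plan is to follow the same template used for $C_1$ and $C_2$, but now to exploit the fact that the facilities deleted in line~12 of Procedure~\ref{alg-soft} leave $F$ permanently, so that the supply ``charged'' by different additions to $C_3$ never overlaps. Concretely, I would let $D_1, D_2, \ldots, D_Q$ be the disjoint groups of clients added to $C_3$, one per successful execution of the third ``if'' statement, and for each $t$ let $j_t$ and $\ell_t \le \log_\alpha k$ be the corresponding center and level (the bound on $\ell_t$ coming from Lemma~\ref{lem:steps}), so that $D_t = Cl(B_{j_t}^{2\ell_t+1})$ at that iteration. Let $F_t$ be the facilities removed in line~12 at that same iteration, that is, the facilities of the current $F$ lying in the medium ball $B_{j_t}^{2\ell_t}$; these are precisely the facilities counted in $Sp(B_{j_t}^{2\ell_t})$, so the triggering condition of line~10 reads
$$\sum_{i \in F_t} x_i^* \;=\; Sp(B_{j_t}^{2\ell_t}) \;\ge\; \frac{1}{2\alpha}\,\bigl|Cl(B_{j_t}^{2\ell_t+1})\bigr| \;=\; \frac{|D_t|}{2\alpha}.$$
The structural point I would isolate first is that $F_1, \ldots, F_Q$ are pairwise disjoint, since a deleted facility can never be counted in a later $Sp(\cdot)$.

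With this in hand I would define the supply by boosting $\mb{x^*}$ on exactly these facilities: set $\hat{x}_i = 2\alpha\, x_i^*$ for $i \in F_1 \cup \ldots \cup F_Q$ and $\hat{x}_i = 0$ otherwise. Disjointness then gives the cost bound immediately, since
$$\sum_{i \in {\cal F}} c_i \hat{x}_i \;=\; 2\alpha \sum_{t=1}^{Q} \sum_{i \in F_t} c_i x_i^* \;\le\; 2\alpha \sum_{i \in {\cal F}} c_i x_i^* \;=\; 2\alpha \cdot \opt_1 .$$

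For the assignment I would work group by group. Within group $t$ the boosted supply satisfies $\sum_{i \in F_t} \hat{x}_i = 2\alpha \sum_{i \in F_t} x_i^* \ge |D_t|$, so the total capacity in $F_t$ is at least the total unit demand of $D_t$. Since every client of $D_t$ may be routed to every facility of $F_t$, this is a feasible transportation instance, and I would take any feasible fractional assignment $(\hat{y}_{ij})_{i \in F_t,\, j \in D_t}$ with $\sum_{i} \hat{y}_{ij} = 1$ and $\sum_{j \in D_t} \hat{y}_{ij} \le \hat{x}_i$, routing each client of $D_t$ only inside $F_t$. This assignment is scenario-independent, hence static, and because the $F_t$ are disjoint the per-facility constraint $\hat{x}_i \ge \sum_{j \in C_3} \hat{y}_{ij}$ holds globally. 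The cost is then a one-line triangle-inequality estimate: a client $j \in D_t$ lies in $B_{j_t}^{2\ell_t+1}$ and a facility $i \in F_t$ lies in $B_{j_t}^{2\ell_t}$, so $d_{ij} \le (2\ell_t+1)r + 2\ell_t r = (4\ell_t+1)r$, and with total assigned mass $1$, $\ell_t \le \log_\alpha k$, and $r = 5\opt_2/k$ we get
$$\sum_{i \in {\cal F}} d_{ij}\hat{y}_{ij} \;\le\; (4\ell_t+1)\,r \;\le\; (4\log_\alpha k + 1)\cdot \frac{5\,\opt_2}{k} \;=\; O(\log_\alpha k)\cdot \frac{\opt_2}{k}.$$

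I expect the only genuinely delicate step to be the disjointness argument and the accompanying bookkeeping: namely, that the facilities summed in $Sp(B_{j_t}^{2\ell_t})$ are exactly those deleted in line~12, and that no later group can route onto an already-deleted facility (so boosting by the same factor $2\alpha$ on disjoint sets never double-charges against $\opt_1$). Once that is pinned down, the remaining ingredients are merely feasibility of a transportation problem and the triangle inequality, both routine.
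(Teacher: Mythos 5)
Your proposal is correct and follows essentially the same argument as the paper: the same decomposition of $C_3$ into disjoint groups per successful third ``if'' test, the same key observation that the removed medium-ball facility sets are pairwise disjoint, the same $2\alpha$ boosting against $\opt_1$, and the same triangle-inequality bound $(4\ell_t+1)r = O(\log_\alpha k)\cdot \opt_2/k$. The only (immaterial) difference is that the paper consolidates each group's boosted supply at the cheapest facility of its medium ball and assigns clients integrally to it, whereas you keep the boosted supply in place and use an arbitrary feasible fractional transportation assignment within the group; both yield identical cost guarantees.
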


\begin{proof} 
Let $ G_1, G_2, \ldots, G_T$ be the disjoint  subsets of clients added at each iteration to  construct  $C_3$ in Procedure \ref{alg-soft}. In particular,  $C_3= G_1 \cup G_2 \cup \ldots \cup G_T$ for some $T$ such that:  for all $t=1,2,\ldots,T, $ $G_t=Cl(B_{j_t}^{2 \ell_t+1})$ for some  client $j_t$ and some integer $\ell_t$ with $  1  \leq \ell_t  \leq \log_{\alpha} k $.  Moreover, the supply $Sp(B_{j_t}^{2 \ell_t})$ is greater  than a $1/2\alpha$ fraction of the clients in $G_t$. Hence, for each ball $B_{j_t}^{2 \ell_t}$, we multiply the supply by $2 \alpha $, move it to the cheapest facility in this ball and make a static assignment of all clients  $G_t$ to this cheapest facility. Since the supply in $B_{j_t}^{2 \ell_t}$ is removed along with clients $G_t$, it will not be used by the other clients in $C_3$.

Formally, let $i_t$ be the cheapest facility in the ball $B_{j_t}^{2 \ell_t}$. We define, for each facility $i$ in $B_{j_t}^{2 \ell_t}$, $\hat{x}_{i} = 2 \alpha \sum_{i' \in  { F}} {\bf 1} (i' \in B_{j_t}^{2 \ell_t}) \cdot x_{i'}^* $ if $i =i_t$ and $\hat{x}_{i}=0$ otherwise. For each client $j \in C_3$, we let $\hat{y}_{ij}=1$ for $i=i_t$ and $j \in G_t$, and let $\hat{y}_{ij}=0$, otherwise. Therefore, the first desired constraints in the lemma are verified. Let us check the last one. The distance between a client and its assigned facility in our solution is at most $r$ plus the diameter of the ball $B_{j_t}^{2 \ell_t}$, i.e., $$r+ 4 {\ell}_t r \leq (4 \log_{\alpha} k +1) \cdot 5 \cdot \opt_2/k = O( \log_{\alpha}k/k) \cdot \opt_2.$$
\qed
\end{proof}


\noindent
{\em Proof of Theorem \ref{thm:static:log}.}
Let $(\tilde{y}_{ij})_{i \in {\cal F}, j \in { C_1}}$ be the solution given in Lemma \ref{lem:C1} for satisfying the clients in $C_1$. We dedicate a supply $\mb x^*$ to clients $C_1$. Let $(\hat{y}_{ij})_{i \in {\cal F}, j \in { C_3}}$ and $(\hat{x}_i)_{i \in{\cal F}}$ be the solution given in Lemma \ref{lem:C3} for satisfying the clients in $C_3$. Finally, we know from Lemma \ref{lem:C2} that $C_2$ has at most $k$ clients, and therefore $C_2$ is a scenario. So we dedicate a supply $\mb x^*$ to $C_2$ and let the optimal assignment $y_{ij}^{C_2}$ be  our static assignment solution for $C_2$. In particular, we give the following solution to {\sf (LP-SCRFL)}, where the first stage solution is $  2 \mb x^* +   \mb {\hat{x}} $ and the static assignment policy is for all $ i \in {\cal F}$: $y_{ij} = \tilde{y}_{ij} $  for $ j \in C_1 $, $ y_{ij}= {y}^{C_2}_{ij}$ for $j \in C_2 $, and  $ y_{ij}=\hat{y}_{ij} $ for $ j \in C_3$. It is clear that $ \sum_{i \in {\cal F}} y_{ij} \geq 1 $, for each $j$ in $C_1 \cup C_2 \cup C_3$. Moreover, for any scenario $S \in {\cal C }^k$ and $i \in {\cal F}$,
\begin{align*}
    \sum_{j \in S} y_{ij} &=  \sum_{j \in S \cap C_1} \tilde{y}_{ij} + \sum_{j \in S \cap C_2} y^{C_2}_{ij} + \sum_{j \in S \cap C_3} \hat{y}_{ij} \\
    &\leq  \sum_{j \in S\cap C_1} \frac{x_i^*}{k} + x_i^* + \hat{x}_i \leq  2x_i^* + \hat{x}_i. 
\end{align*}
Therefore, our solution is feasible for {\sf (LP-SCRFL)}. Let us evaluate its cost. The cost of the first stage is at most  $2 OPT_1 + 2 \alpha \opt_1= O(\alpha) \cdot\opt_1$. 
For the second-stage cost, consider any scenario $S \in {\cal C}^k$, 
We have
\begin{align*}
    \sum_{i \in {\cal F}} \sum_{ j \in S} d_{ij} y_{ij} &=  \sum_{j \in S \cap C_1}  \sum_{i \in {\cal F}} d_{ij}  \tilde{y}_{ij} +  \sum_{j \in S \cap C_2}   \sum_{i \in {\cal F}} d_{ij}y^{C_2}_{ij} + \sum_{j \in S \cap C_3} \sum_{i \in {\cal F}} d_{ij}\hat{y}_{ij} \\
    &\leq  \sum_{j \in S\cap C_1} O(\log_{\alpha}k ) \cdot \frac{\opt_2}{k} + \opt_2 + \sum_{j \in S \cap C_3} O(\log_{\alpha}k ) \cdot \frac{\opt_2}{k} \\
    &\leq  O(\log_{\alpha}k ) \cdot \opt_2 + \opt_2 +  O(\log_{\alpha}k ) \cdot \opt_2  \\
    &= O(\log_{\alpha}k ) \cdot \opt_2.
\end{align*}
By balancing the terms $\alpha $ and $\log_{\alpha} k$, we choose $ \alpha =\log k / \log \log k$ which gives $O(\log k / \log \log k)$-approximation to {\sf (LP-SCRFL)}.
\qed

Similar to the uncapacitated problem, we can solve \eqref{static-scrfl} efficiently using a compact linear program. In fact, we dualize the inner maximization problem in the objective function of \eqref{static-scrfl} in the same way as \eqref{eq:dual}. In addition to that, we reformulate the second constraint in \eqref{static-scrfl} using the same dualization technique as follows:  for each $i \in {\cal F}$,

\begin{align*}
 \max_{S \in {\cal C}^k } \; \{ \sum_{ {j \in  S}}   y_{ij}  \}   
 &   = \max_{\mb h \in [0,1]^{\vert C \vert}} \; \{ \sum_{ {j \in {\cal C}}}   y_{ij} h_j  \; \; \vert \; \; \sum_{j \in {\cal C} } h_j \leq k \} \\
 &=  \min_{\eta_i, \lambda_{ij} \geq 0 } \; \{ k \eta_i + \sum_{j \in {\cal C}} \lambda_{ij}  \; \vert  \; \eta_i + \lambda_{ij} \geq    y_{ij}, \; \forall j \in {\cal C} \}.
\end{align*}
The linear program is given by
\begin{equation}  
\begin{aligned}  
\min \; &  \sum_{i \in {\cal F}} c_i x_i + k \mu + \sum_{j \in {\cal C}} \omega_j \\
 \text{s.t.    }\; & \mu + \omega_j \geq  \sum_{i \in {\cal F}} d_{ij} y_{ij}, \qquad & \forall j \in {\cal C}, \\
 &  \sum_{i \in {\cal F}} y_{ij}\geq 1, \qquad & \forall j \in {\cal C} ,    \\
  & x_i \geq k \eta_i + \sum_{j \in {\cal C}} \lambda_{ij} ,& \forall i \in {\cal F} ,    \\
    &\eta_i +  \lambda_{ij} \geq y_{ij}, & \forall i \in {\cal F} , \forall j \in {\cal C},   \\
& x_i, y_{ij}, \lambda_{ij}, \eta_i, \omega_j, \mu \geq 0,  \qquad  & \forall i  \in {\cal F},  \forall j \in {\cal C}, \\
\end{aligned}
\end{equation}
which can be reduced, after removing the variables $y_{ij}$, to 
\begin{equation} \label{eq:lp2} 
\begin{aligned}  
\min \; &  \sum_{i \in {\cal F}} c_i x_i + k \mu + \sum_{j \in {\cal C}} \omega_j \\
 \text{s.t.    }\; & \mu + \omega_j \geq  \sum_{i \in {\cal F}} d_{ij} (\eta_i +  \lambda_{ij}) , \qquad & \forall j \in {\cal C}, \\
 &  \sum_{i \in {\cal F}} \eta_i +  \lambda_{ij}\geq 1, \qquad & \forall j \in {\cal C} ,    \\
  & x_i \geq k \eta_i + \sum_{j \in {\cal C}} \lambda_{ij}, & \forall i \in {\cal F}  ,   \\
& x_i, \lambda_{ij}, \eta_i, \omega_j, \mu \geq 0,  \qquad  & \forall i  \in {\cal F},  \forall j \in {\cal C}. \\
\end{aligned}
\end{equation}

Finally, we  round the optimal solution of \eqref{static-scrfl}  to an integral solution using the filtering and rounding techniques from Shmoys et al. \cite{shmoys1997approximation} while losing only a  factor of 12. Again, this rounding technique was designed for the deterministic facility location problem, but the same argument works as well for \eqref{static-scrfl}. Finally, since  \eqref{static-scrfl} gives $O(\log k / \log \log k)$-approximation to ({\sf LP-SCRFL}) and we only loose a constant factor in the rounding, this results in $O(\log k / \log \log k)$- approximation algorithm for ({\sf SCRFL}). We state the result in the following theorem and, for completeness, we present  the details  of the rounding in Appendix \ref{appendix:B}.
\begin{theorem} \label{thm:rounding:capacitated}
\eqref{static-scrfl} can be rounded to  give  $O(\frac{\log k}{\log \log k})$-approximation algorithm to \eqref{scrfl}. 
\end{theorem}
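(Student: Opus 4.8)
The plan is to round an optimal solution of \eqref{static-scrfl} into an integral supply vector together with an integral assignment that is feasible for \emph{every} scenario of \eqref{scrfl}, inflating both the supply cost and the worst-case assignment cost by only a constant factor. Combined with Theorem~\ref{thm:static:log} (which bounds the optimum of \eqref{static-scrfl} by $O(\log k/\log\log k)$ times that of ({\sf LP-SCRFL})) and the fact that ({\sf LP-SCRFL}) lower bounds \eqref{scrfl}, this yields the claimed guarantee. Let $(\mb{x}^*,\mb{y}^*)$ be optimal for \eqref{static-scrfl}, and for each client $j$ write its scenario-independent fractional assignment cost as $D_j=\sum_{i\in{\cal F}} d_{ij} y^*_{ij}$. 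I would first apply the filtering step of Shmoys et al.~\cite{shmoys1997approximation}: fix $\beta\in(0,1)$, discard for each client $j$ all assignment mass placed on facilities farther than $g_j:=D_j/(1-\beta)$, and rescale the surviving $y^*_{ij}$ so that $\sum_{i} y_{ij}\ge 1$ is restored. A Markov-type argument shows the surviving mass is at least $\beta$, so rescaling multiplies every $y^*_{ij}$ (hence every supply requirement) by at most $1/\beta$, while $g_j$ now bounds every distance over which $j$ is served.

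Next I would run the standard greedy clustering. Sort clients in nondecreasing order of $g_j$, repeatedly pick the unclustered client $t$ of smallest radius, open the \emph{cheapest} facility $i_t$ inside the ball $B(t,g_t)$, and assign to $i_t$ every still-unclustered client $j'$ whose ball $B(j',g_{j'})$ meets $B(t,g_t)$; the selected centers then have pairwise disjoint primary balls. Since $g_t\le g_{j'}$ for each $j'$ clustered to $t$, the triangle inequality gives $d_{i_t j'}\le 3 g_{j'}=3D_{j'}/(1-\beta)$, so each client is served at integral distance $O(1)\cdot D_{j'}$. Summing over any fixed scenario and comparing with the filtered fractional assignment cost bounds the integral worst-case assignment cost by a constant times the second-stage cost of \eqref{static-scrfl}.

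The remaining and crucial step is the supply cost. Each center $t$ receives a set $N_t$ of clients assigned to $i_t$, and since a scenario contains at most $k$ clients, the robust supply needed at $i_t$ is $\min(k,|N_t|)$, giving integral supply cost $\sum_t c_{i_t}\min(k,|N_t|)$. The cap $\min(k,\cdot)$ is exactly what introduces an economy of scale analogous to the fixed opening cost in classical facility location, so this is where the robustness must be exploited. To charge $c_{i_t}\min(k,|N_t|)$ against the fractional supply cost $\sum_i c_i x^*_i$, I would, for each center $t$, select a scenario $S_t\subseteq N_t$ with $|S_t|=\min(k,|N_t|)$ and invoke the robust capacity constraint $x^*_i\ge \max_{S\in{\cal C}^k}\sum_{j\in S} y^*_{ij}$ of \eqref{static-scrfl} to conclude that the (rescaled, filtered) fractional solution places at least $|S_t|$ units of supply on facilities serving $S_t$. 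Using the disjointness of the primary balls and the fact that $i_t$ was cheapest when $t$ was processed, I would charge $c_{i_t}|S_t|$ to these fractional units, with disjointness ensuring each unit is charged to only a bounded number of centers.

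I expect this last supply charge to be the main obstacle. In the classical uncapacitated rounding one only needs a single unit of fractional opening per center, whereas here the worst-case capacity $\min(k,|N_t|)$ must be matched against fractional supply that the robust constraint guarantees only \emph{per scenario}, and that supply may lie on facilities outside $B(t,g_t)$, where $i_t$ need not be cheapest. Reconciling the scenario-wise guarantee with a ball in which $i_t$ is the cheapest facility, and controlling the overlap between the enlarged balls used for the supply charge and the disjoint primary balls, is precisely where care is required and where the final constant (equal to $12$ for the appropriate choice of $\beta$ and the $1/\beta$, $1/(1-\beta)$, and triangle-inequality factors above) is pinned down. The rounded pair $(\mb{x},\mb{y})$ is integral and, by the capacity accounting, feasible for every scenario, so combining the two constant-factor cost bounds with Theorem~\ref{thm:static:log} completes the proof.
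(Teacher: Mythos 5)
Your filtering step and the greedy clustering by nondecreasing radius match the paper's first moves, but the proof has a genuine gap exactly at the step you flag yourself: the supply-cost charge. Your plan gives each cluster center's facility $i_t$ an integral supply of $\min(k,|N_t|)$ and tries to charge $c_{i_t}\min(k,|N_t|)$ against the fractional supply cost. This cannot work in general: the fractional supply that serves the clients of $N_t$ sits within distance $g_{j'}$ of each \emph{client} $j'\in N_t$, hence possibly far outside $B(t,g_t)$, and $i_t$ is only guaranteed to be cheapest \emph{inside} $B(t,g_t)$. Since unit supply costs are arbitrary and unrelated to the metric, a facility outside the ball can have $c_i \ll c_{i_t}$, so $c_{i_t}\min(k,|N_t|)$ is not bounded by any constant times $\sum_i c_i x_i^*$; the disjointness of the primary balls does not help because the supply you need to charge is not in those balls. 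Note also that what you are constructing is an \emph{integral static} assignment (every client of $N_t$ always goes to $i_t$), and the paper explicitly warns that integral static assignment policies can be arbitrarily bad for this model --- that warning is precisely about the failure mode above. An acknowledgment that ``this is where care is required'' does not close the gap, and in this setup it cannot be closed locally.

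The paper's rounding avoids this by never trying to concentrate a whole cluster's worst-case demand at one facility. After the same filtering, it first rounds up every facility with $x_i\geq 1/2$ (a factor $2$ in first-stage cost). For the remaining fractional facilities, it takes the unprocessed client $j'$ of smallest radius, lets $V$ be the fractional facilities serving $j'$, moves $\lceil \sum_{i\in V} x_i\rceil$ units to the \emph{cheapest facility of $V$}, and reroutes, for every affected client, only the portion of its demand that was going to $V$; each client's assignment stays fractional and static, split across several consolidated facilities. The charge is then valid because cheapness is used only within $V$, and the sets $V$ are disjoint across iterations; feasibility in every scenario is inherited from the static capacity constraint $\sum_{i\in V} x_i \geq \sum_{j\in S}\sum_{i\in V} y_{ij}$ for all $S\in{\cal C}^k$. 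The final \emph{integral} assignment is not static at all: once the supply is integral, the second stage in each realized scenario is a transportation problem, whose optimum is integral. This yields the constant factor $12$ ($\frac{4}{\alpha}\opt_1+\frac{3}{\alpha(1-\alpha)}\opt_2$ at $\alpha=\tfrac12$), and combined with Theorem~\ref{thm:static:log} gives the claimed $O(\log k/\log\log k)$ guarantee. So the structural lesson you are missing is: round the supply, keep the fractional static assignment as the feasibility/cost witness, and let scenario-wise integrality come from transportation-LP integrality rather than from a static integral assignment.
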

Note that after rounding the supply in our solution of \eqref{static-scrfl},  the integral second-stage assignment for each realized scenario is a transportation problem and therefore its optimal solution is integral. We would like to emphasize that while the fractional assignment in our solution is static,  our integral assignment is not  necessarily static. In fact, a policy with an integral static assignment could even be bad our model.

\section{Conclusion.}
In this paper, we give a $O(\log k / \log \log k)$-approximation for soft-capacitated robust facility location problems with an implicit model of demand uncertainty. It is an interesting open question to study whether there exists  a constant approximation algorithm for the problem, even in special cases such as the Euclidean metric. Our solution approach relies on static fractional assignment policies, which we show are optimal for the  uncapacitated problem and give a strong theoretical guarantee for soft-capacitated case. 
Static assignment policies, while reasonable for the case of soft-capacities can be shown to be arbitrarily bad for the case of hard-capacities, where in addition to cost per unit, there is also an upper bound on supply at each facility. It is another interesting open direction to study any non-trivial approximation in this setting.

{\small
{\newpage
\bibliographystyle{abbrv}
\bibliography{robust}}}

\begin{thebibliography}{10}

\bibitem{anthony2010plant}
B.~Anthony, V.~Goyal, A.~Gupta, and V.~Nagarajan.
\newblock A plant location guide for the unsure: Approximation algorithms for
  min-max location problems.
\newblock {\em Mathematics of Operations Research}, 35(1):79--101, 2010.

\bibitem{atamturk2007}
A.~Atamt{\"u}rk and M.~Zhang.
\newblock Two-stage robust network flow and design under demand uncertainty.
\newblock {\em Operations Research}, 55(4):662--673, 2007.

\bibitem{basciftci2019distributionally}
B.~Basciftci, S.~Ahmed, and S.~Shen.
\newblock Distributionally robust facility location problem under
  decision-dependent stochastic demand.
\newblock {\em arXiv preprint arXiv:1912.05577}, 2019.

\bibitem{charikar2001algorithms}
M.~Charikar, S.~Khuller, D.~M. Mount, and G.~Narasimhan.
\newblock Algorithms for facility location problems with outliers.
\newblock In {\em Proceedings of the Twelfth annual ACM-SIAM symposium on
  Discrete algorithms}, pages 642--651, 2001.

\bibitem{delage2010distributionally}
E.~Delage and Y.~Ye.
\newblock Distributionally robust optimization under moment uncertainty with
  application to data-driven problems.
\newblock {\em Operations research}, 58(3):595--612, 2010.

\bibitem{dhamdhere2005pay}
K.~{Dhamdhere}, V.~{Goyal}, R.~{Ravi}, and M.~{Singh}.
\newblock How to pay, come what may: approximation algorithms for demand-robust
  covering problems.
\newblock In {\em 46th Annual IEEE Symposium on Foundations of Computer Science
  (FOCS'05)}, pages 367--376, 2005.

\bibitem{FJMM07}
U.~Feige, K.~Jain, M.~Mahdian, and V.~Mirrokni.
\newblock Robust combinatorial optimization with exponential scenarios.
\newblock In {\em International Conference on Integer Programming and
  Combinatorial Optimization}, pages 439--453. Springer, 2007.

\bibitem{guha1999greedy}
S.~Guha and S.~Khuller.
\newblock Greedy strikes back: Improved facility location algorithms.
\newblock {\em Journal of Algorithms}, 31(1):228--248, 1999.

\bibitem{gupta2014thresholded}
A.~Gupta, V.~Nagarajan, and R.~Ravi.
\newblock Thresholded covering algorithms for robust and max--min optimization.
\newblock {\em Mathematical Programming}, 146(1-2):583--615, 2014.

\bibitem{gupta2016robust}
A.~Gupta, V.~Nagarajan, and R.~Ravi.
\newblock Robust and maxmin optimization under matroid and knapsack uncertainty
  sets.
\newblock {\em ACM Transactions on Algorithms (TALG)}, 12(1):10, 2016.

\bibitem{gupta2004boosted}
A.~Gupta, M.~P{\'a}l, R.~Ravi, and A.~Sinha.
\newblock Boosted sampling: approximation algorithms for stochastic
  optimization.
\newblock In {\em Proceedings of the Thirty-Sixth annual ACM Symposium on
  Theory of Computing}, pages 417--426, 2004.

\bibitem{housni2018optimality}
O.~E. Housni and V.~Goyal.
\newblock On the optimality of affine policies for budgeted uncertainty sets.
\newblock {\em arXiv preprint arXiv:1807.00163}, 2018.

\bibitem{immorlica2004costs}
N.~Immorlica, D.~Karger, M.~Minkoff, and V.~S. Mirrokni.
\newblock On the costs and benefits of procrastination: Approximation
  algorithms for stochastic combinatorial optimization problems.
\newblock In {\em Proceedings of the Fifteenth annual ACM-SIAM symposium on
  Discrete algorithms}, pages 691--700, 2004.

\bibitem{khandekar2008two}
R.~Khandekar, G.~Kortsarz, V.~Mirrokni, and M.~R. Salavatipour.
\newblock Two-stage robust network design with exponential scenarios.
\newblock In {\em Proceedings of the 16th annual European Symposium on
  Algorithms}, pages 589--600, 2008.

\bibitem{linhares2019approximation}
A.~Linhares and C.~Swamy.
\newblock Approximation algorithms for distributionally-robust stochastic
  optimization with black-box distributions.
\newblock In {\em Proceedings of the 51st Annual ACM Symposium on Theory of
  Computing}, pages 768--779, 2019.

\bibitem{ravi2004hedging}
R.~Ravi and A.~Sinha.
\newblock Hedging uncertainty: Approximation algorithms for stochastic
  optimization problems.
\newblock In {\em International Conference on Integer Programming and
  Combinatorial Optimization}, pages 101--115. Springer, 2004.

\bibitem{shmoys2000approximation}
D.~B. Shmoys.
\newblock Approximation algorithms for facility location problems.
\newblock In {\em Proceedings of the Third International Workshop on
  Approximation Algorithms for Combinatorial Optimization}, pages 27--33, 2000.

\bibitem{shmoys1997approximation}
D.~B. Shmoys, {\'E}.~Tardos, and K.~Aardal.
\newblock Approximation algorithms for facility location problems.
\newblock In {\em Proceedings of the Twenty-Ninth Annual ACM Symposium on
  Theory of Computing}, pages 265--274, 1997.

\bibitem{snyder2006facility}
L.~V. Snyder.
\newblock Facility location under uncertainty: a review.
\newblock {\em IIE Transactions}, 38(7):547--564, 2006.

\bibitem{swamy2006approximation}
C.~Swamy and D.~B. Shmoys.
\newblock Approximation algorithms for 2-stage stochastic optimization
  problems.
\newblock {\em ACM SIGACT News}, 37(1):33--46, 2006.

\end{thebibliography}

\begin{appendix}

\section{Rounding and proof of Theorem \ref{thm:rounding:uncapacitated}} \label{appendix:A}

\noindent
{\bf Rounding} (Shmoys et al.\cite{shmoys1997approximation}).
Recall that $n$ is the number of facilities and $m$ is the number of clients. Let $(\mb x^*, \mb y^*)= \left( (x^*_i)_{ i \in {\cal F}},  (y^*_{ij})_{ i \in {\cal F}, j \in {\cal C}} \right)$ be an optimal solution for  \eqref{static-scrfl}. Let $\opt_1$ and $\opt_2$ respectively be the corresponding optimal first-stage and  second-stage cost. In particular,  $\opt_2= \max_{S \in {\cal C}^k } \sum_{i \in {\cal F}} \sum_{ j \in {\cal C}}    {\bf 1}( j \in S) \cdot   d_{ij} y^*_{ij} $.
For each client $j \in {\cal C}$, let
$$L_j = \sum_{i \in {\cal F} }d_{ij} y^*_{ij}.$$
We sort $L_j$ in ascending order. Suppose without loss of generality, that
$L_1 \leq L_2 \leq \ldots \leq L_m$.
For each client $j \in {\cal C}$, we consider $B_j$ the  ball centered at the client $j$ with radius $\alpha L_j$ (where $\alpha \geq 1$ will be defined later).
We choose greedily the non-intersecting  balls in ascending order of $L_j$ and   open the cheapest facility in each selected ball. That is, we consider every client in this sorted order, and only include its ball if it does not intersect any previously selected ball.
Each client is assigned to its closest opened facility. For each client $j \in {\cal C}$,
$$ L_j \geq \sum_{i \notin B_j}   d_{ij} y^*_{ij}  \geq \sum_{i \notin B_j}  \alpha  L_j y^*_{ij}.$$
Hence,
$$ \frac{1}{\alpha}\geq \sum_{i \notin B_j} y^*_{ij},$$
which implies
$$ \sum_{i \in B_j}  y^*_{ij} \geq 1- \frac{1}{\alpha}. $$
Therefore,
$$ \sum_{i \in B_j} x^*_i \geq 1- \frac{1}{\alpha} .$$
By summing over the non-intersecting balls, the cost of our opened facilities is less than $\frac{1}{1-\frac{1}{\alpha}} \opt_1$.
Now let us consider the assignment cost. For client $j$,  if its ball $B_j$ is chosen by the greedy procedure above, then the client is assigned to the opened facility $i$ in $B_j$ and
$ d_{ij} \leq \alpha L_j$. If not, the client $j$ is assigned to a facility $i$ no further away than the one chosen in the ball $B_k$ that intersected $B_j$. By the triangle inequality:
 $$ d_{ij} \leq d_{iv} +  d_{jv}  \leq  \alpha L_j  + 2 \alpha L_k \leq 3 \alpha L_j ,$$
 where $ v \in B_j \cap B_k$ and $L_k \leq L_j$. Hence, for any scenario  $S \in {\cal C}^k$, the second-stage cost is at most $3 \alpha \cdot \sum_{j \in S} L_j$, which is at most $3 \alpha \cdot \opt_2$ . By optimizing over $\alpha$, we take $\alpha= \frac{4}{3}$, which results in the 4-approximation with respect to $\opt_1+ \opt_2$. Moreover, since \eqref{static-urfl} is equivalent to {\sf(LP-URFL)} and  {\sf(LP-URFL)} is a relaxation of \eqref{urfl}, the rounding above gives 4-approximation algorithm for  \eqref{urfl}.


\section{Rounding and proof of Theorem \ref{thm:rounding:capacitated}} \label{appendix:B}

\noindent
{\bf Rounding} (Shmoys et al.\cite{shmoys1997approximation}). Let $(\mb x^*, \mb y^*)= \left( (x^*_i)_{ i \in {\cal F}},  (y^*_{ij})_{ i \in {\cal F}, j \in {\cal C}} \right)$ be an optimal solution of \eqref{static-scrfl}. Let $ z= \max_{ S \in {\cal C}^k} \sum_{i \in {\cal F}, j \in {S}}  d_{ij} y^*_{ij}$ and set $ \alpha \in (0,1)$. For each client $j \in {\cal C}$, let  $\pi$ be a permutation of facilities that serve $j$ in the optimal solution of \eqref{static-scrfl} such that $ d_{\pi(1)j} \leq d_{\pi(2)j}  \leq \ldots \leq d_{\pi(n)j}$.  Define the radius  $d_j (\alpha )= d_{\pi(i^*)j}$ where $i^*= \min \{    i' \; \vert \; \sum_{i=1}^{i'} y^*_{\pi(i)j} \geq   \alpha  \}$. Following the same definitions in Shmoys el al. \cite{shmoys1997approximation}, we say that a solution $(\mb x, \mb y)$ is $g$-close if for any $j \in {\cal C}$, $y_{ij} >0$ implies that $d_{ij}  \leq g_j$.

\vspace{2mm}
\noindent
{\textit {Claim.}} Given a feasible solution $(\mb x, \mb y)$, we can construct a $g$-close feasible solution $(\bar{\mb x}, \bar{ \mb y})$ such that 
$\bar{\mb x} = \frac{1}{\alpha} \mb x$, $\bar{z} =  \frac{1}{\alpha} z$   and  $g_j \leq d_j( \alpha ), \; \forall j \in {\cal C}$ where $\bar{z}= \max_{ S \in {\cal C}^k} \sum_{i \in {\cal F}, j \in {S}}  d_{ij} \bar{y}_{ij}$.
\begin{proof}
In fact, for each $j$, we set $\bar{y}_{\pi(i)j}=y_{\pi(i)j} / \alpha      $ for $ 1 \leq i \leq i^*$ and $\bar{y}_{\pi(i)j}=0$ otherwise. We have
$$ \sum_{i \in {\cal F}} \bar{y}_{ij} =  \sum_{i=1}^{i^*}     \frac{1}{\alpha}    y_{ \pi(i)   j}   \geq 1 .$$ 
The second constraint in \eqref{static-scrfl}  is trivially verified since $x$ is multiplied by $\frac{1}{\alpha}$ and $y_{ij}$ are either multiplied by $\frac{1}{\alpha}$ or set to $0$. Moreover, all edges in our solution with a positive flow have a cost at most  $d_j(\alpha)$. Hence, $(\bar{\mb x}, \bar{ \mb y})$ is $g$-close with $g_j \leq d_j( \alpha )$ for all $j$. 
\qed
\end{proof}

Let $(\bar{\mb x}, \bar{ \mb y})$ be the $g$-close solution corresponding to the optimal solution $(\mb x^*, \mb y^*)$. For the ease of notation, we just use the notation  $(\mb x, \mb y)$ instead of $(\bar{\mb x}, \bar{ \mb y})$ in the rest of the proof. We round-up all $x_i \geq \frac{1}{2}$ to $\lceil x_i \rceil$. We pay at most a factor $2$ in the first-stage cost. Let us focus on the remaining fractional facilities, say $\hat{{\cal F}}$, i.e., 
$$ \hat{{\cal F}}= \{ i \in {\cal F} \; \; \vert \; \;   0 < x_i < 1/2 \}. $$
We let $\hat{{\cal C}}$ denote the set of clients such that half of their supply is coming from $ \hat{{\cal F}}$, i.e.,
$$ \hat{{\cal C}}= \{ j \in {\cal C} \; \; \vert \; \;   \sum_{i\in \hat{{\cal F}}} y_{ij} \geq \frac{1}{2} \} .$$
We sort the clients in $\hat{{\cal C}}$  in ascending order of $g_j$ and do the following until $\hat{{\cal C}} $ becomes empty. We take  the client $j$ with the smallest $g_j$ in $\hat{{\cal C}}$, let say $j'$. Let
$$ V= \{ i \in \hat{{\cal F}}  \; : \; y_{ij'}  > 0 \},$$ 
and
$$ T= \{ j \in {\cal C} \; : \;    \exists i \in V  \text{ s.t. }         y_{ij}  > 0 \}.$$ 
We put the  $\lceil \sum_{i \in V} x_i \rceil $ units of supply in the cheapest facility in $V$ which we denote $f_c$. We set each $x_i$ in $V \setminus \{f_c\}$ to 0. The clients in $T$ are the ones affected by this change. We will route all of their demand from $V$ to $f_c$ and make this assignment static. Note that, at this step, we have routed only their demand from $V$ and not their entire demand.
This  is feasible because 
$$ \lceil \sum_{i \in V} x_i \rceil \geq  \sum_{i \in V} x_i  \geq \sum_{j \in {\cal C}}  {\bf 1 }(j \in S) \cdot \sum_{i \in V}   y_{ij}  $$ for any scenario $S$. Since we choose the clients in $\hat{\cal C}$ in ascending order of $g_j$,  the triangle inequality ensures that the solution is $3 g$-close.  We keep doing this until $\hat{\cal C}$ becomes empty. 

Let us analyze the final cost. We lost a factor $\frac{1}{\alpha}$ in both first-stage and second-stage cost. Then, we lost a factor $2$ in the first-stage cost by rounding up the solution and moving the supply to the  cheapest facilities after each rounding. We lost another factor $2$ in the first stage to satisfy ${\cal C} \setminus \hat{\cal C}$. For the second stage, we have a solution that is  $3g$-close with $g_j \leq d_j( \alpha)$. Note that
$$ d_j (\alpha) \leq \frac{1}{1- \alpha} \sum_{i \in {\cal F}} d_{ij} y_{ij}.$$ 
In particular, for any scenario $S$,
$$ \sum_{j \in S} g_j \leq \frac{1}{1- \alpha} \sum_{j \in S}  \sum_{i \in {\cal F}} d_{ij} y_{ij}.$$ 
Hence, we loose $\frac{3}{1-\alpha}$ in the second stage. Overall, the factor is 
$$ \frac{4}{\alpha} \opt_1+ \frac{3}{\alpha( 1- \alpha)} \opt_2.$$ 
We set $\alpha=\frac{1}{2}$, which gives 12 approximation to ({\sf LP-SCRFL}). Finally, since  \eqref{static-scrfl} gives $O(\log k / \log \log k)$-approximation to {\sf(LP-SCRFL)} and {\sf(LP-SCRFL)} is a relaxation of ({\sf SCRFL}), this results in $O(\log k / \log \log k)$-approximation algorithm for ({\sf SCRFL}).

\vspace{2mm}
\noindent

\end{appendix}

\end{document}